\newtheorem{theorem}{Theorem}
\newtheorem{lemma}[theorem]{Lemma}
\theoremstyle{remark}
\newtheorem{remark}[theorem]{Remark}
\numberwithin{table}{section}
\numberwithin{figure}{section}
\numberwithin{equation}{section}
\numberwithin{theorem}{section}
\renewcommand{\bar}[1]{{\overline{#1}}}
\renewcommand{\hat}[1]{{\widehat{#1}}}
\renewcommand{\tilde}[1]{{\widetilde{#1}}}
\newcommand{\orthProj}{\Pi^{\ell,m}_{\H|h}}
\newcommand{\avg}[1]{{\{\![ #1 ]\!\}}}
\newcommand{\jump}[1]{{[\![ #1 ]\!]}}
\newcommand{\upw}[1]{{\left( #1 \right)_\uparrow}}
\newcommand{\IR}{\mathbb R}
\newcommand{\IN}{\mathbb N}
\newcommand{\IP}{\mathbb P}
\newcommand{\elem}{\mathcal K}
\newcommand{\T}{\mathcal T}
\newcommand{\F}{\mathcal F}
\renewcommand{\H}{\mathcal H}
\renewcommand{\vec}{\mathbf}
\newcommand{\gvec}{\boldsymbol}
\newcommand{\bpi}{\gvec \pi}
\newcommand{\Nu}{\boldsymbol \nu}
\newcommand{\dx}{\, d\vec x}
\newcommand{\ds}{\, d\sigma}
\begin{document}

\title[A subcell-enriched Galerkin method] {A subcell-enriched Galerkin method for advection problems} 
\thanks{This work is supported by the Deutsche Forschungsgemeinschaft (DFG, German Research Foundation) under Germany's Excellence Strategy EXC 2181/1 - 390900948 (the Heidelberg STRUCTURES Excellence Cluster).}
\thanks{The authors thank Prof. D.~Kuzmin (TU Dortmund University) for the active participation in discussions regarding this new method and his valuable input at various stages of development of this manuscript.}

\author{Andreas Rupp}
\address{Interdisciplinary Center for Scientific Computing (IWR), Heidelberg University, Mathematikon, Im Neuenheimer Feld 205, 69120 Heidelberg, Germany}
\email{andreas.rupp@fau.de, andreas.rupp@uni-heidelberg.de}

\author{Moritz Hauck}
\address{Department of Mathematics, Friedrich-Alexander University Erlangen-N\"urnberg, Cauerstra\ss e 11, 91058 Erlangen, Germany}
\email{moritz.hauck@fau.de}

\author{Vadym Aizinger}
\address{Chair of Scientific Computing, University of Bayreuth, Universit\"atsstra\ss e 30, 95447 Bayreuth, Germany}
\email{vadym.aizinger@uni-bayreuth.de}


\date{\today}


\begin{abstract} 
 In this work, we introduce a generalization of the enriched Galerkin (EG) method. The key feature of our scheme is an~adaptive two-mesh approach that, in addition to the standard enrichment of a~conforming finite element discretization via discontinuous degrees of freedom, allows to subdivide selected (e.g. troubled) mesh cells in a~non-conforming fashion and to use further discontinuous enrichment on this finer submesh. We prove stability and sharp \textit{a priori} error estimates for a~linear advection equation by using a~specially tailored projection and conducting some parts of a~standard convergence analysis for both meshes. By allowing an arbitrary degree of enrichment on both, the coarse and the fine mesh (also including the case of no enrichment), our analysis technique is very general in the sense that our results cover the range from the standard continuous finite element method to the standard discontinuous Galerkin (DG) method with (or without) local subcell enrichment. Numerical experiments confirm our analytical results and indicate good robustness of the proposed method.
\end{abstract}
\subjclass[2010]{65M60, 65N30}
\keywords{enriched Galerkin method, arbitrary order finite elements, subcell enrichment, advection equation, hyperbolic problem}
\maketitle
%
%
%
\section{Introduction}
The main idea of the enriched Galerkin (EG) method is to extend the approximation space of the continuous finite elements by including some element-local discontinuous functions and to utilize a~solution procedure similar to that of the discontinuous Galerkin (DG) method (Riemann solvers, edge fluxes, \ldots). The latter feature makes the EG schemes fundamentally different from the XFEM methods that frequently also rely on local approximation space enrichments. The resulting discretization is locally conservative and robust but, in multidimensions, has substantially fewer degrees of freedom than a~DG method of the same order.

In \cite{RuppL2020}, the EG methods were re-cast as a~generalization of the classical finite elements, i.e.~continuous Galerkin (CG) methods by considering the EG space as a~combination of arbitrary continuous and discontinuous Galerkin (DG) test and trial spaces. However, the original EG scheme proposed in \cite{Becker2003} for the advection equation was a~combination of lowest order finite elements and finite volumes discretized using the DG framework. This methodology was further developed and investigated by Wheeler, Lee, and coworkers, who also considered higher order enriched CG methods and a~wider range of applications~\cite{lee2016locally,sun2009locally,LEE2019,lee2018phase,lee2017adaptive,lee2018enriched,choo2018enriched,Kadeethum2019,kadeethum2019comparison}. The analysis of EG method in~\cite{RuppL2020} used a~special EG-type projection and was limited to elliptic and parabolic problems. Nonetheless, it paved the way to the analysis for hyperbolic equations in this work.

Similarly to CG approximations, EG methods for hyperbolic equations may develop spurious oscillations. Kuzmin et al.~\cite{KuzminHR2020} proposed several algebraic flux correction schemes to ensure the validity of local maximum principles. Limiting techniques of this kind have also been successfully applied to CG~\cite{Kuzmin2012,Lohmann2019} and DG~\cite{FrankRK2020} discretizations. The use of localized subcell limiters was found to be essential in extensions to high-order Bernstein finite elements \cite{HajdukEtAl2020a,HajdukEtAl2020b,KuzminMQL2020,Lohmann2019}. An $hp$-adaptive approach to subcell limiting was introduced in \cite{KuzminEtAl2019}. Using continuous blending functions, a high-order finite element approximation on a large macrocell was combined with a bound-preserving piecewise (multi-)linear subcell approximation.

Another well-known class of methods relying on subcell limiting to suppress spurious oscillations has been introduced in~\cite{Dumbser2014} and generalized to unstructured meshes in~\cite{Dumbser2016}. These techniques are based on the ADER-DG schemes proposed in~\cite{Dumbser2008} and possess a~very attractive capability to detect high- and low-regularity solution behavior. The underlying {\it a posteriori} limiting strategy was inspired by the Multi-dimensional Optimal Order Detection (MOOD) approach originally developed for finite volumes. In the context of the ADER-DG methods, physical and numerical admissibility conditions are enforced by, first, advancing the solution in time using a~high-order DG method on the coarse mesh, and, for troubled cells, repeating the last time step locally via a~low-order DG (i.e. finite volume) method on the submesh.

Our subcell EG method has the potential to further customize the local approximation space by supporting the whole range of local polynomial orders on both, the coarse and the fine (subcell) mesh. This feature of our approach makes it possible to combine popular $p$- and $hp$-adaptivity techniques with the two-mesh approach , while exploiting its intrinsic ability to assess the local solution regularity.



The main purpose of this work is to present a~stability and \textit{a~priori} error analysis for the subcell-enriched EG method for the linear advection equation and to demonstrate the performance of the new scheme using some test problems. As in~\cite{RuppL2020} this analysis is conducted in a unified framework that covers the CG, DG, and EG (with and without subcell enrichment) discretizations. The implementation of the new numerical scheme was carried out in our FESTUNG\footnote{https://github.com/FESTUNG} framework~\cite{FrankRAK2015,ReuterAWFK2016,JaustRASK2018,ReuterHRFAK2019,ReuterRAFK2020} based on our EG scheme for the shallow-water equations~\cite{HauckAFHR2020}.
\subsection{Model problem}
We consider a non-stationary advection equation on a~bounded Lipschitz domain $\Omega \subset \IR^d$ (with $d \le 3$). The precise formulation of the linear hyperbolic problem to be solved is as follows:
\begin{equation}\label{EQ:advection}
 \partial_t u + \nabla \cdot (\vec a(t, \vec{x})\, u) = f(t, \vec{x}) \qquad \text{ in } (0,T) \times \Omega,
\end{equation}
for a~given velocity field $\vec a \in L^\infty(0,T;W^{1,\infty}(\Omega))$ and a~right-hand side function $f \in L^2((0,T) \times \Omega)$. Additionally, initial data $u_0 \in C(\bar \Omega)$ is prescribed, and we denote by $\Nu_\Omega$ the outward unit normal to $\partial \Omega$. Furthermore, we assume that the inflow boundary
\begin{equation*}
 \Gamma_- ~\coloneqq~ \{ \vec{x} \in \partial \Omega : \vec a(t, \vec{x}) \cdot \Nu_\Omega < 0\}
\end{equation*}
is independent of time and disjointly subdivided into Dirichlet $\Gamma_\textup D$ and flux $\Gamma_\textup F$ boundaries (this subdivision is also assumed to be independent of time), i.e.
\begin{equation*}
 u = u_\textup D \text{ on } \Gamma_\textup D \quad \text{ and } \quad |\vec a \cdot \Nu_\Omega| u = g_\textup F \text{ on } \Gamma_\textup F, \quad u_\textup D \in L^2(0,T;H^{1/2}(\Gamma_\textup D)), \quad g_\textup F \in L^2(0,T;H^{1/2}(\Gamma_\textup F)).
\end{equation*}
%
For the sake of simplicity, we assume that there exists $\delta > 0$ such that $\Gamma_\text F \subset \{ \vec{x} \in \Gamma_- \colon \vec a(t, \vec{x}) \cdot \Nu_\Omega \le -\delta \}$.
\subsection{Structure of the manuscript}
The remainder of this manuscript is structured as follows: In Section~\ref{sec:EG}, we introduce the enriched Galerkin method with local subcell enrichment for advection equations. Section~\ref{SEC:stability} investigates the energy stability of the new scheme, while its \textit{a~priori} convergence is proved in Section~\ref{SEC:error} and verified numerically in Section~\ref{SEC:numerics}. A short conclusions section wraps up the article.
\section{The enriched Galerkin finite element method}
\label{sec:EG}
\subsection{Basic definitions and notations}\label{SEC:basic_definitions}
%
In the following, $(\T_\H)_{\H \in \mathcal I \subset \IR^+}$ denotes a \emph{successively refined} family of $\T_\H \coloneqq \T_\H (\Omega) \coloneqq \{ \elem_i : i = 1, \ldots, N_\textup{el}\}$  ($N_\textup{el}> 0$ is the number of elements) of \emph{$d$-dimensional non-overlapping partitions} of $\Omega$ (see \cite[Def. 1.12]{PietroErn}) that is assumed to be \emph{regular} (in the sense of \cite[Def. 1.38]{PietroErn}) and \emph{geometrically conformal} (in the sense of \cite[Def. 1.55]{ErnGuermond}). For the sake of simplicity, we assume that $\T_\H$ consists of simplices and/or quadrilaterals/hexahedrons. 

Furthermore, $\T_{\H|h}$ denotes a mesh $\T_\H$ of which some elements have been refined (Fig.~\ref{fig:H-mesh} (middle)). The mesh $\T_{\H|h}$ can be geometrically non-conformal. By construction, $\T_{\H|h}$ can be embedded into a regular and conformal mesh $\T_h$ (Fig.~\ref{fig:H-mesh} (right)), which contains the elements added during the refinement process. Hence, we can write $\T_{\H|h}$ as disjoint union
\begin{equation*}
 \T_{\H|h} = \mathcal S_\H \uplus \mathcal S_h \qquad \text{ with } \qquad \mathcal S_\H \subset \T_\H, \; \mathcal S_h \subset \bigcup_{h \le \mathfrak h < \H} \T_\mathfrak h
\end{equation*}
denoting the subsets of unrefined and refined elements, respectively.
Writing $\F(\T_\H)$ for the set of \emph{faces} we define the skeleton of $\T_{\H|h}$ as 
\begin{equation*}
\Sigma \coloneqq \bigcup_{F \in \F(\T_{\H|h})} F.
\end{equation*}

We write $h_\elem$ for the diameter of $\elem$;
furthermore, parameter $\mathcal H$ refers to the maximum diameter of an element of a mesh, i.e., $\mathcal H = \max\{h_\elem : \elem \in \T_\H \}$. If $\Nu$ without an index is evaluated on a face, a unit normal with respect to the face is arbitrarily chosen.

\begin{figure}[ht]
 \includegraphics[draft = false, width=0.32\linewidth]{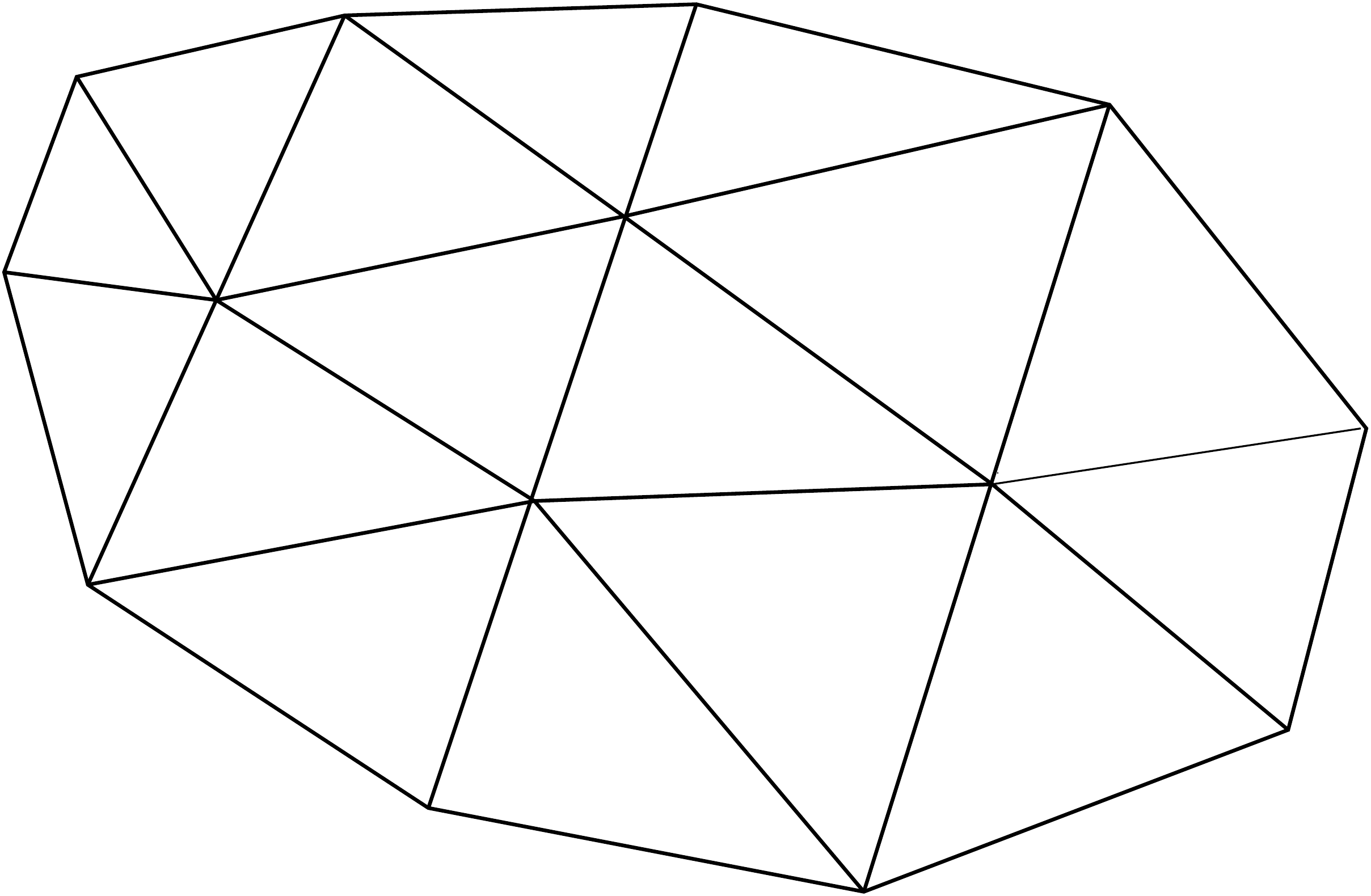}
 \includegraphics[draft = false, width=0.32\linewidth]{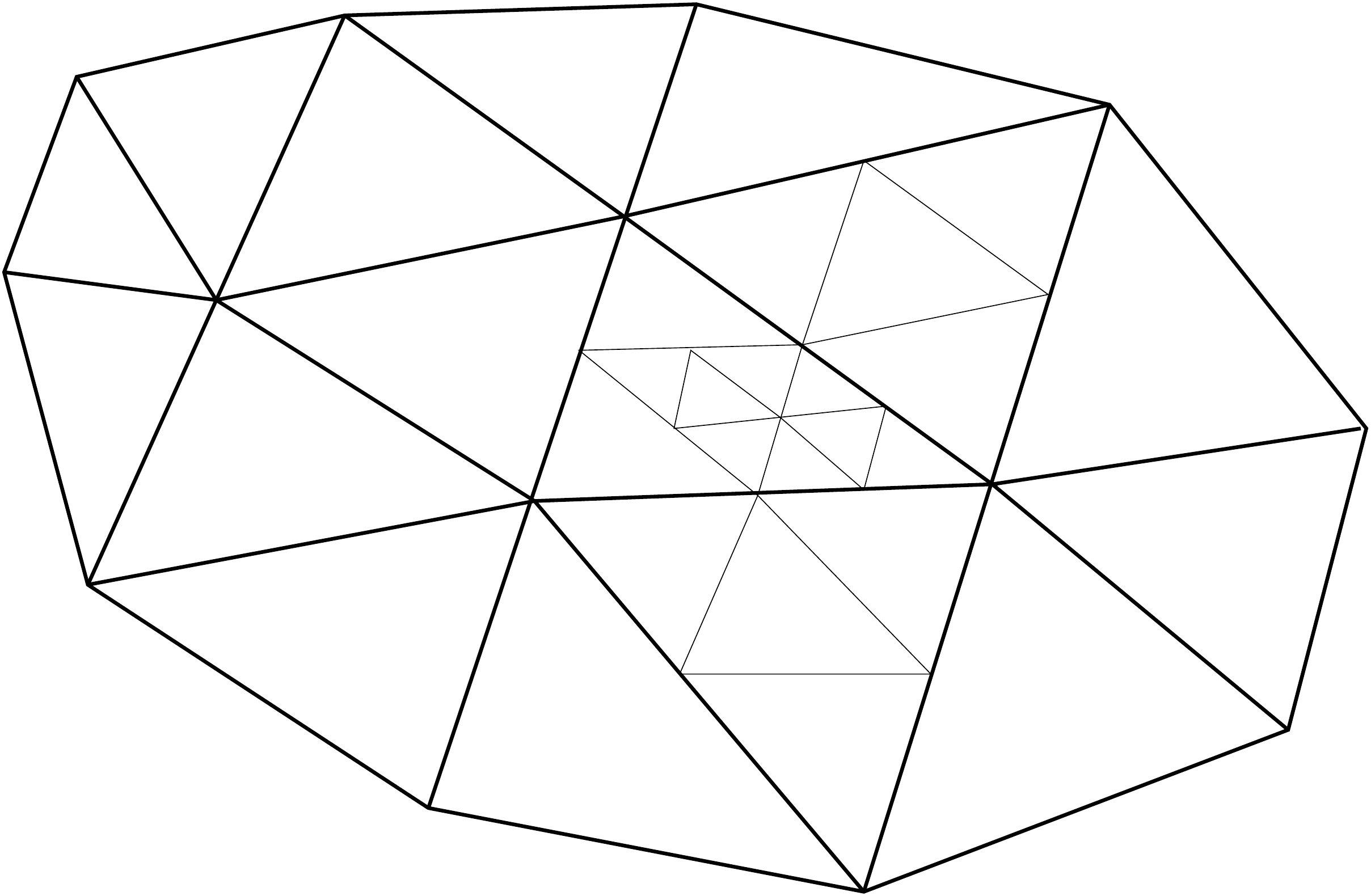}
 \includegraphics[draft = false, width=0.32\linewidth]{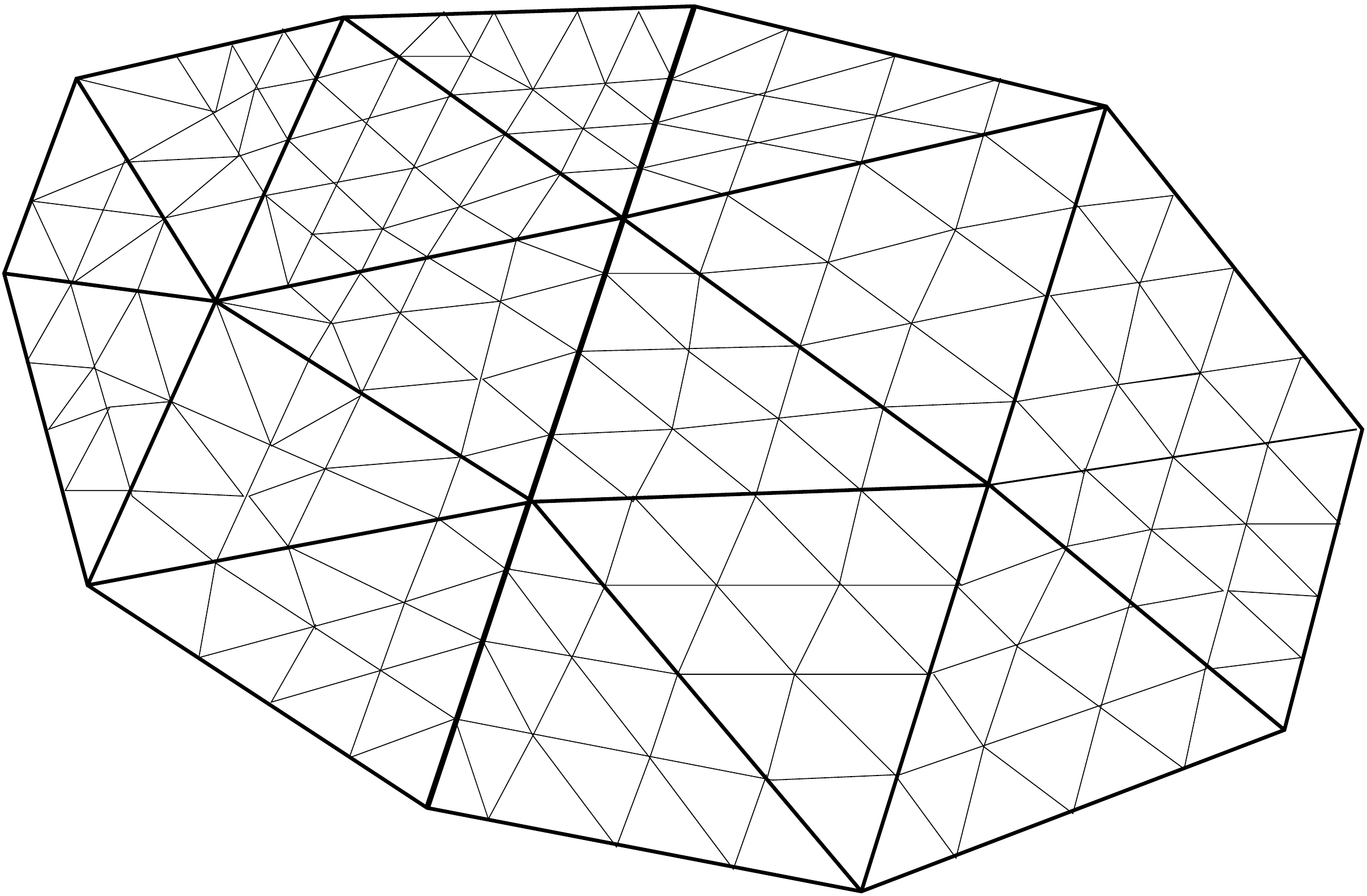}
 \caption{Schematics of $\T_{\H}$ (left), $\T_{\H|h}$ (middle), and $\T_{h}$ (right) meshes.}
\label{fig:H-mesh}
\end{figure}

The double mesh sequence $(\T_{\H|h})^{\H \in \mathcal I}_{h \le \H, h \in \mathcal I}$ is called \emph{weakly quasi-uniform} if there exists a constant $\rho > 0$ such that for all $\H \in \mathcal I$, all $h < \H$, $h \in \mathcal I$, and all $\hat \elem \in \T_\H \setminus \mathcal S_\H$, we have
\begin{align*}
 h_{\H|\hat \elem} ~:=~  \max \{ h_\elem : \elem \in \mathcal S_h \text{ with } \elem \subset \hat \elem \}
 ~\le~ \rho \min \{ h_\elem : \elem \in \mathcal S_h \text{ with } \elem \subset \hat \elem \}.
\end{align*}
To simplify notation we set $h_{\H|\hat \elem} = h_{\hat \elem}$ for $\hat \elem \in \mathcal S_\H$.

The \emph{test} and \emph{trial spaces} for our EG method utilize the \emph{broken polynomial spaces of order $m$} on some mesh $\mathcal M \in \{\T_\H, \T_{\H|h}\}$. They are denoted by $\IP_m(\mathcal M)$ and consist of element-wise polynomials of degree at most $m$ (simplices) or tensor-product polynomials of degree at most $m$ in each spatial coordinate (quadrilaterals/hexahedrons) without any continuity constraints. Thus,
\begin{equation*}
 V^k_{\ell,m} \coloneqq \left( \IP_k(\T_\H) \cap C(\Omega) \right) + \IP_\ell (\T_\H) + \IP_m(\T_{\H|h})
\end{equation*}
for $-1 \le m \le \ell \le k$, $k > 0$. Here, $\IP_{-1}(\mathcal M) = \{0\}$, and one can observe that $\IP_k(\T_\H) \cap C(\Omega)$ is the standard continuous finite element space. 
Obviously $\IP_m(\T_\H) \subset V^k_{\ell,m} \subset \IP_k(\T_h)$. 

In this work, we utilize several types of projection/interpolation operators denoted as follows:
\begin{itemize}
\item $\Pi^r_\H$ and $\Pi^{\ell,m}_{\H|h}$ are the $L^2$-projections into the spaces $\IP_r(\T_\H)$ and $\IP_\ell(\T_\H) + \IP_m(\T_{\H|h})$, respectively.
\item $I^k_\H$ is the standard interpolation operator for finite element space $\IP_k(\T_\H) \cap C(\Omega)$.
\item $\bpi$ is the mapping used to project the initial data into $V^k_{\ell,m}$ proposed in~\cite{RuppL2020} and given by
\begin{equation}
\label{bpi}
\bpi \colon L^2(\Omega) \cap C(\Omega) \to V^k_{\ell,m}, \quad \bpi u \coloneqq I^k_\H u + \orthProj (u - I^k_\H u).
\end{equation}
\end{itemize}
%
\subsection{Semi-discrete formulation}
The semi-discrete EG formulation of the problem can be constructed by using the standard DG bilinear and linear forms for the advection equation on $\T_{\H|h}$. The bilinear form uses the notion of averages $\avg{\cdot}$ and jumps $\jump{\cdot}$, which for $\F \ni F \subset \partial \elem^- \cap \partial \elem^+$ with $\elem^- \neq \elem^+$ are defined as
\begin{equation*}
\avg{g} \coloneqq \frac 12 (g|_{\elem^-} + g|_{\elem^+}), \qquad
\jump{g} \coloneqq g|_{\elem^-} \Nu_{\elem^-} + g|_{\elem^+} \Nu_{\elem^+},
\end{equation*}
%
for a scalar $g$ that is element-wise smooth enough to have traces. On $\partial \Omega$, this definition is modified as follows:
\begin{equation*}
\avg{g} \coloneqq g, \qquad 
\jump g \coloneqq g \Nu_\Omega, 
\end{equation*}
Hence, the jump turns a scalar into a~vector. Also note the following property of jumps used in our analysis
\begin{equation*}
 \jump{g^2} = 2 \avg{g} \jump{g}.
\end{equation*}
Given a~velocity field $\vec a$, we define the upwind value of $g$ as
\begin{equation*}
 \upw{g} \vec a \coloneqq \avg{g} \,\vec a + \frac{\mbox{sign}(\vec a \cdot \Nu)}{2} \jump{g} \cdot \Nu \, \vec a,
\end{equation*}
where $\operatorname{sign}(\cdot)$ is the standard signum function.

Using this notation, we can formulate our semi-discrete problem
\begin{equation*}
a (U, \varphi) = b(\varphi)
\end{equation*}
with trial function $U$ and test function $\varphi$ from $V^k_{\ell,m}$ for almost every $t \in (0,T)$ and $U(0) = \bpi u_0$, where
\begin{align*}
 a (U, \varphi) \coloneqq & \int_\Omega \partial_t U \varphi \dx 
  - \sum_{\elem \in \T_{\H|h}} \int_\elem U \vec a \cdot \nabla \varphi \dx +  \int_{\Sigma \setminus \Gamma_-} \!\!\!\!\!\! \upw{U} \vec a \cdot\jump{\varphi} \ds,\\
 b(\varphi) \coloneqq & \int_\Omega f \varphi \dx + \int_{\Gamma_\textup D} u_\textup D |\vec a \cdot \Nu_\Omega| \varphi \ds + \int_{\Gamma_\textup F} g_\textup F \varphi \ds.
\end{align*}
%

Note that $a(\cdot, \cdot)$ is a~standard DG bilinear form; its consistency implies that the EG bilinear form is also consistent since $V^k_{\ell,m} \subset \IP_k(\T_h)$.
\section{Stability analysis}\label{SEC:stability}
The stability of the method can be obtained exactly as the stability of the DG methods. Thus,
\begin{theorem}
 The EG solution is $L^\infty(0,T;L^2(\Omega))$ stable.
\end{theorem}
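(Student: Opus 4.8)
The plan is to establish the a priori estimate by testing the semi-discrete formulation with $\varphi = U$ and exploiting the skew-symmetric structure of the advective terms, exactly as one does for the classical DG method (the excerpt itself emphasizes that ``the stability of the method can be obtained exactly as the stability of the DG methods''). Since $V^k_{\ell,m} \subset \IP_k(\T_h)$, the pair $(U,U)$ is admissible, and all the algebraic identities available for broken polynomials on $\T_{\H|h}$ apply without change.

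First I would set $\varphi = U$ in $a(U,U) = b(U)$. The time-derivative term gives $\int_\Omega \partial_t U\,U \dx = \tfrac12 \tfrac{d}{dt}\|U(t)\|_{L^2(\Omega)}^2$. For the volume term, an element-wise integration by parts of $\sum_\elem \int_\elem U\,\vec a\cdot\nabla U \dx = \tfrac12\sum_\elem \int_\elem \vec a\cdot\nabla(U^2)\dx$ produces $-\tfrac12\sum_\elem\int_\elem (\nabla\cdot\vec a)\,U^2\dx + \tfrac12\sum_\elem\int_{\partial\elem} (\vec a\cdot\Nu_\elem)\,U^2\ds$, the boundary contributions of which recombine over interior faces into terms involving $\avg{\vec a\cdot\Nu}\jump{U^2} = 2\avg{U}\avg{\vec a\cdot\Nu}\jump{U}$ (using the jump identity $\jump{g^2} = 2\avg{g}\jump{g}$ quoted in the excerpt, together with the continuity of $\vec a$). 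Combining this with the upwind face term $\int_{\Sigma\setminus\Gamma_-}\upw{U}\vec a\cdot\jump{U}\ds$, the definition $\upw{U}\vec a = \avg{U}\vec a + \tfrac{\operatorname{sign}(\vec a\cdot\Nu)}{2}\jump{U}\cdot\Nu\,\vec a$ makes the $\avg{U}$ pieces cancel, leaving on each interior face the manifestly nonnegative quantity $\tfrac12\int_F |\vec a\cdot\Nu|\,|\jump{U}|^2\ds$; on the outflow part of $\partial\Omega$ one is left with $\tfrac12\int_{\partial\Omega\setminus\Gamma_-}|\vec a\cdot\Nu_\Omega|\,U^2\ds$, which is again nonnegative. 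The upshot is the identity
\begin{equation*}
 \tfrac12\tfrac{d}{dt}\|U\|_{L^2(\Omega)}^2 + \tfrac12\|\,|\vec a\cdot\Nu|^{1/2}\jump{U}\|_{L^2(\Sigma\setminus\Gamma_-)}^2 + \tfrac12\|\,|\vec a\cdot\Nu_\Omega|^{1/2}U\|_{L^2(\partial\Omega\setminus\Gamma_-)}^2 - \tfrac12\int_\Omega(\nabla\cdot\vec a)\,U^2\dx = b(U).
\end{equation*}

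Next I would bound the right-hand side $b(U) = \int_\Omega fU\dx + \int_{\Gamma_\textup D} u_\textup D|\vec a\cdot\Nu_\Omega|U\ds + \int_{\Gamma_\textup F} g_\textup F U\ds$. The volume term is handled by Cauchy–Schwarz and Young, $\int_\Omega fU \le \tfrac12\|f\|_{L^2(\Omega)}^2 + \tfrac12\|U\|_{L^2(\Omega)}^2$. The two boundary terms are treated similarly: on $\Gamma_\textup D$ one writes $\int_{\Gamma_\textup D} u_\textup D|\vec a\cdot\Nu_\Omega|U\ds \le \tfrac12\int_{\Gamma_\textup D}|\vec a\cdot\Nu_\Omega|u_\textup D^2\ds + \tfrac12\int_{\Gamma_\textup D}|\vec a\cdot\Nu_\Omega|U^2\ds$, but $\Gamma_\textup D\subset\Gamma_-$ while the coercive boundary term above lives on $\partial\Omega\setminus\Gamma_-$, so this does not obstruct the estimate and merely enters the data; on $\Gamma_\textup F$ the structural assumption $|\vec a\cdot\Nu_\Omega|\ge\delta>0$ lets one absorb $\int_{\Gamma_\textup F} g_\textup F U\ds \le \tfrac{1}{2\delta}\|g_\textup F\|_{L^2(\Gamma_\textup F)}^2 + \tfrac{\delta}{2}\|U\|_{L^2(\Gamma_\textup F)}^2$ against $\tfrac12\int_{\Gamma_\textup F}|\vec a\cdot\Nu_\Omega|U^2\ds$ (note $\Gamma_\textup F\subset\partial\Omega\setminus\Gamma_-$ is impossible since $\Gamma_\textup F\subset\Gamma_-$ — so in fact one keeps the $g_\textup F$ contribution on the data side and discards the sign-definite inflow term, which is the simplest route). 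After dropping the nonnegative jump and outflow terms and using $\|\nabla\cdot\vec a\|_{L^\infty} \le \|\vec a\|_{W^{1,\infty}}$ from $\vec a\in L^\infty(0,T;W^{1,\infty}(\Omega))$, I obtain a differential inequality $\tfrac{d}{dt}\|U\|_{L^2(\Omega)}^2 \le C(t)\|U\|_{L^2(\Omega)}^2 + D(t)$ with $C\in L^1(0,T)$ and $D\in L^1(0,T)$ collecting the data norms. Grönwall's lemma then yields the uniform-in-time bound $\|U(t)\|_{L^2(\Omega)}^2 \le e^{\int_0^T C}\big(\|U(0)\|_{L^2(\Omega)}^2 + \int_0^T D\big)$, and since $U(0) = \bpi u_0$ with $u_0\in C(\bar\Omega)$ the initial norm is finite; taking the supremum over $t\in(0,T)$ gives $U\in L^\infty(0,T;L^2(\Omega))$.

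The only genuinely delicate point is the bookkeeping of the boundary terms: one must track carefully which pieces of $\partial\Omega$ carry a sign-definite (coercive) contribution and which appear on the data side, and handle $\Gamma_\textup F$ via the uniform lower bound $\delta$ on $|\vec a\cdot\Nu_\Omega|$ — this is exactly why that structural assumption was imposed. Everything else is the standard DG energy argument carried verbatim onto the (non-conforming) mesh $\T_{\H|h}$, which is legitimate because the forms are the usual broken DG forms and $V^k_{\ell,m}$ is a subspace of $\IP_k(\T_h)$; no property specific to the enrichment or the two-mesh structure is needed for stability.
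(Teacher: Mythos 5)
Your argument is essentially the paper's proof: test with $\varphi=U$, integrate by parts element-wise, combine the resulting face terms with the upwind flux via $\jump{g^2}=2\avg{g}\jump{g}$ to obtain a nonnegative jump penalty, bound $b(U)$ by Cauchy--Schwarz and Young (using $|\vec a\cdot\Nu_\Omega|\ge\delta$ on $\Gamma_\textup F$), and conclude with Gr\"onwall. One bookkeeping slip: the energy identity you display omits the coercive inflow contribution $\tfrac12\int_{\Gamma_-}|\vec a\cdot\Nu_\Omega|\,U^2\ds$ produced by the integration by parts, yet this term is precisely what absorbs the $U$-dependent halves $\tfrac12\int_{\Gamma_\textup D}|\vec a\cdot\Nu_\Omega|U^2\ds$ and $\tfrac{\delta}{2}\int_{\Gamma_\textup F}U^2\ds$ of your Young estimates (they are not ``data'' and cannot simply be discarded); the paper keeps it by writing the coercive term as $\tfrac12\int_\Sigma|\vec a\cdot\Nu|\jump{U}^2\ds$ over the whole skeleton including $\Gamma_-$.
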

\begin{proof}
 We test $a_h$ with $\varphi = U$, use the identity $\tfrac{1}{2} (g')^2 = g g'$ and integrate by parts to obtain
 \begin{align*}
  \frac{1}{2} \partial_t \| U & \|^2_{L^2(\Omega)} +\,\frac{1}{2} \int_\Omega (\nabla \cdot \vec a) U^2  \dx
\underbrace{-~\frac{1}{2}  \int_\Sigma \jump{ U^2 } \cdot \vec a \ds + \int_{\Sigma \setminus \Gamma_-} \!\!\!\!\!\!\! \upw{U} \vec a \cdot\jump{U} \ds }_{ = \frac{1}{2} \int_\Sigma | \vec a \cdot \Nu_\Omega | \jump{U}^2 \ds }\\
  ~=~ & \int_\Omega f U \dx + \int_{\Gamma_\textup D} u_\text D | \vec a \cdot \Nu_\Omega | U \ds + \int_{\Gamma_\textup F} g_\textup F U \ds\\
  ~\le~ & \int_\Omega f U \dx + \frac{1}{2} \int_{\Gamma_\text D} \left[ |\vec a \cdot \Nu_\Omega | u_\text D^2 + |\vec a \cdot \Nu_\Omega | \jump{U}^2 \right] \ds 
  +\frac{1}{2} \int_{\Gamma_\textup F} \left[ \frac{g^2_\textup F}{|\vec a \cdot \Nu_\Omega|} + |\vec a \cdot \Nu_\Omega | \jump{U}^2 \right] \ds, 
 \end{align*}
 where the last inequality follows from the Young's and Cauchy--Schwarz inequalities and uses the assumption $\vec a(t, \vec{x}) \cdot \Nu_\Omega \le -\delta$ on $\Gamma_\textup F$. This directly implies the $L^\infty(L^2)$-stability without exponential growth of constants if $f \equiv 0$ and $\nabla \cdot \vec a \ge 0$ after integrating with respect to time. Otherwise Gr\"onwall's, Young's, and Cauchy--Schwarz inequalities give the result (after moving $\tfrac{1}{2} \int_\Omega (\nabla \cdot \vec a) U^2  \dx$ to the right-hand side).
\end{proof}
\section{Error analysis}\label{SEC:error}
For the error analysis, we need some auxiliary results:
\begin{lemma}
 The operator $\bpi$ of \eqref{bpi} is an~orthogonal projection into $\IP_\ell(\T_\H) + \IP_m(\T_{\H|h})$ with respect to the $L^2$-inner product, i.e.,
 \begin{equation}\label{orth}
  \int_\Omega (\bpi u - u)\varphi \dx = 0 \qquad \forall \varphi \in \IP_\ell(\T_\H) + \IP_m(\T_{\H|h}).
 \end{equation}
\end{lemma}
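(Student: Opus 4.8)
The plan is to substitute the definition~\eqref{bpi} of $\bpi$ and rearrange the terms so that the claimed identity~\eqref{orth} reduces immediately to the defining property of the $L^2$-projection $\orthProj$. Set $W \coloneqq \IP_\ell(\T_\H) + \IP_m(\T_{\H|h})$. Using $\bpi u = I^k_\H u + \orthProj(u - I^k_\H u)$ and subtracting $u$, the continuous part $I^k_\H u$ recombines with $-u$, giving
\begin{equation*}
 \bpi u - u \;=\; \orthProj(u - I^k_\H u) - (u - I^k_\H u) \;=\; (\orthProj - \mathrm{Id})\,(u - I^k_\H u).
\end{equation*}

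Before performing this, I would note that the objects involved are well-defined: since $u \in L^2(\Omega) \cap C(\Omega)$, the nodal interpolant $I^k_\H u \in \IP_k(\T_\H) \cap C(\Omega)$ exists, hence $w \coloneqq u - I^k_\H u \in L^2(\Omega)$, and $\orthProj w \in W$ is well-defined because $W$ is a finite-dimensional (in particular closed) subspace of $L^2(\Omega)$ — indeed $\orthProj$ was introduced precisely as the $L^2$-projection onto $W$. The key and only substantive step is then to invoke the characterization of the $L^2$-orthogonal projection: for every $w \in L^2(\Omega)$ and every $\varphi \in W$ one has $\int_\Omega (\orthProj w - w)\,\varphi \dx = 0$. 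Applying this with $w = u - I^k_\H u$ and combining with the displayed identity yields $\int_\Omega (\bpi u - u)\,\varphi \dx = 0$ for all $\varphi \in W$, which is exactly~\eqref{orth}.

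I do not anticipate a genuine obstacle: the statement is, in essence, a reorganization of the definition of $\bpi$ together with Galerkin orthogonality of $\orthProj$. The only remarks worth making explicit are (i) the well-posedness of $I^k_\H u$, guaranteed by $u \in C(\bar\Omega)$, and (ii) that the wording ``$\bpi$ is an orthogonal projection into $W$'' should be read as the orthogonality relation~\eqref{orth}, since $\bpi u$ itself need not lie in $W$ but only in the larger space $V^k_{\ell,m}$; what the proof shows is that $\bpi u$ and the $L^2$-projection of $u$ onto $W$ are indistinguishable when tested against functions from $W$.
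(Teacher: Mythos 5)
Your proposal is correct and follows essentially the same route as the paper's (one-line) proof: rewrite $\bpi u - u = (\orthProj - \mathrm{Id})(u - I^k_\H u)$, note $u - I^k_\H u \in L^2(\Omega)$, and invoke the $L^2$-orthogonality of $\orthProj$. Your added remarks on well-posedness and on the precise reading of ``orthogonal projection into $W$'' are sensible but do not change the argument.
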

\begin{proof}
 Result follows directly from the fact that $I^k_\H u - u \in L^2(\Omega)$ and the $L^2$-orthogonality of $\orthProj$.
\end{proof}
\begin{lemma}[Best approximation property of $\orthProj$]
 For all $\varphi \in \IP_l(\T_{\H}) + \IP_m(\T_{\H|h})$, $g \in L^2(\Omega)$, and all $\elem \in \T_{\H}$,
 \begin{equation}\label{bestappr}
  \|\orthProj g - g\|_{L^2(\elem)} \leq \|\varphi - g\|_{L^2(\elem)}
 \end{equation}
\end{lemma}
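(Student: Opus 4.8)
The plan is to derive the claimed local best-approximation estimate for $\orthProj$ from the fact that $\orthProj$ is the $L^2(\Omega)$-orthogonal projection onto the space $\IP_\ell(\T_\H) + \IP_m(\T_{\H|h})$, combined with the observation that this target space is \emph{element-wise} defined on the coarse mesh $\T_\H$. The key structural point I would make first is that, because every element $\elem \in \T_\H$ is a union of elements of $\T_{\H|h}$ (either $\elem \in \mathcal S_\H$, so $\elem$ is its own piece, or $\elem$ has been refined into elements of $\mathcal S_h$ contained in $\elem$), the space $\IP_\ell(\T_\H) + \IP_m(\T_{\H|h})$ has no continuity constraint across the faces of $\T_\H$. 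Consequently the global $L^2$-projection $\orthProj$ decouples over the coarse elements: $(\orthProj g)|_\elem$ depends only on $g|_\elem$ and is precisely the $L^2(\elem)$-orthogonal projection of $g|_\elem$ onto the local space $W_\elem \coloneqq \{\psi|_\elem : \psi \in \IP_\ell(\T_\H) + \IP_m(\T_{\H|h})\}$.

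With that reduction in hand, the estimate is the standard Hilbert-space statement that an orthogonal projection onto a closed subspace is the best approximation from that subspace. Concretely, I would fix $\elem \in \T_\H$ and $\varphi \in \IP_\ell(\T_\H) + \IP_m(\T_{\H|h})$. Then $\varphi|_\elem \in W_\elem$, and writing $P_\elem$ for the $L^2(\elem)$-orthogonal projection onto $W_\elem$, the decoupling gives $(\orthProj g)|_\elem = P_\elem(g|_\elem)$. Pythagoras in $L^2(\elem)$ then yields
\begin{equation*}
 \|g - \varphi\|_{L^2(\elem)}^2 = \|g - P_\elem g\|_{L^2(\elem)}^2 + \|P_\elem g - \varphi\|_{L^2(\elem)}^2 \geq \|g - P_\elem g\|_{L^2(\elem)}^2 = \|g - \orthProj g\|_{L^2(\elem)}^2,
\end{equation*}
using that $g - P_\elem g \perp W_\elem$ and $P_\elem g - \varphi \in W_\elem$. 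Taking square roots gives \eqref{bestappr}.

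The only genuine subtlety — and the step I would be most careful about — is justifying the local decoupling of $\orthProj$, i.e.\ that the $L^2(\Omega)$-projection onto $\IP_\ell(\T_\H) + \IP_m(\T_{\H|h})$ really does act element-by-element over $\T_\H$. This rests on two facts: first, that $\T_{\H|h}$ refines $\T_\H$, so that each $\elem \in \T_\H$ is exactly covered by elements of $\T_{\H|h}$ and hence test functions from $\IP_\ell(\T_\H) + \IP_m(\T_{\H|h})$ can be chosen supported in a single $\elem$; and second, that the sum space, being a sum of two discontinuous (broken) spaces relative to $\T_\H$, imposes no inter-element coupling. Once this is spelled out, $\orthProj$ splits as a direct sum $\bigoplus_{\elem \in \T_\H} P_\elem$ over the $L^2$-orthogonal decomposition $L^2(\Omega) = \bigoplus_\elem L^2(\elem)$, and the remainder of the argument is the routine Hilbert-space best-approximation fact quoted above. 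Everything else — no quasi-uniformity, no scaling, no trace inequalities — is unnecessary here, so the proof is short.
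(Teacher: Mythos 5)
Your proposal is correct and follows essentially the same route as the paper, whose proof is the one-line remark that the result ``follows directly from the $L^2$-orthogonality of $\Pi^{\ell,m}_{\H|h}$ and the possibility to localize the projection to all $\elem \in \T_\H$.'' You have simply spelled out both ingredients --- the element-wise decoupling of the projection over $\T_\H$ (valid because the target space is broken across coarse-element faces and $\T_{\H|h}$ refines $\T_\H$) and the Pythagoras best-approximation identity --- in full detail.
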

\begin{proof}
 Follows directly from the $L^2$-orthogonality of $\Pi^{\ell,m}_{\H|h}$ and the possibility to localize the projection to all $\elem \in \T_\H$.
\end{proof}
\begin{lemma}[Inverse inequality]
 Let $({\T_\H})_{\H \in \mathcal{I}}$ be a regular  mesh sequence. There exists a constant $C>0$ such that for all $\H \in \mathcal{I}$, all $\varphi \in \IP_k(\T_\H)$, and all $\elem \in \T_\H$
 \begin{equation}\label{inv}
  \vert \varphi \vert_{H^1(\elem)} \leq C~ h_\elem^{-1} \|\varphi\|_{L^2(\elem)}.
 \end{equation}
\end{lemma}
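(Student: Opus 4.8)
The plan is to run the classical reference-element scaling argument. Fix a reference simplex (resp.\ reference cube) $\hat\elem$, and for each $\elem \in \T_\H$ let $F_\elem \colon \hat\elem \to \elem$ be the associated affine (resp.\ multilinear) diffeomorphism. Then $\varphi \in \IP_k(\elem)$ corresponds to $\hat\varphi \coloneqq \varphi \circ F_\elem$, which lies in a \emph{fixed} finite-dimensional polynomial space on $\hat\elem$ (polynomials of degree at most $k$ in the simplicial case, and polynomials of a degree bounded solely in terms of $k$ and $d$ in the tensor-product case, since the components of $F_\elem$ are themselves polynomials). On a finite-dimensional space all norms are equivalent, so there is a constant $\hat C = \hat C(k,d,\hat\elem)$ with
\begin{equation*}
 |\hat\varphi|_{H^1(\hat\elem)} \le \hat C \, \|\hat\varphi\|_{L^2(\hat\elem)}
\end{equation*}
for every such $\hat\varphi$; this is the only place where the polynomial structure is used.

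Next I would transport this estimate to $\elem$ via the change-of-variables formulas. Writing $\varphi = \hat\varphi \circ F_\elem^{-1}$ and using the chain rule $\nabla\varphi = (DF_\elem^{-1})^{\top}\,(\nabla\hat\varphi)\circ F_\elem^{-1}$ together with the substitution $\hat x = F_\elem^{-1}(x)$, one gets
\begin{equation*}
 |\varphi|_{H^1(\elem)} \le \|DF_\elem^{-1}\|_{L^\infty(\elem)} \, \bigl(\sup_{\hat\elem}|\det DF_\elem|\bigr)^{1/2}\, |\hat\varphi|_{H^1(\hat\elem)},
 \qquad
 \|\hat\varphi\|_{L^2(\hat\elem)} \le \bigl(\inf_{\hat\elem}|\det DF_\elem|\bigr)^{-1/2}\, \|\varphi\|_{L^2(\elem)}.
\end{equation*}
Chaining these two bounds with the reference estimate eliminates $\hat\varphi$ and yields
\begin{equation*}
 |\varphi|_{H^1(\elem)} \le \hat C \, \|DF_\elem^{-1}\|_{L^\infty(\elem)} \Bigl(\frac{\sup_{\hat\elem}|\det DF_\elem|}{\inf_{\hat\elem}|\det DF_\elem|}\Bigr)^{1/2}\, \|\varphi\|_{L^2(\elem)}.
\end{equation*}

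It then remains to bound the geometric factors independently of $\elem$. Since $({\T_\H})_{\H\in\mathcal I}$ is regular, the diameter $h_\elem$ and the inradius of $\elem$ are comparable up to a constant $\rho_0$ depending only on the family; standard geometric estimates give $\|DF_\elem^{-1}\|_{L^\infty(\elem)} \le C(\rho_0)\,h_\elem^{-1}$, and, in the non-affine case, the ratio $\sup|\det DF_\elem|/\inf|\det DF_\elem|$ is likewise bounded by a constant depending only on $\rho_0$. Substituting these into the last display gives \eqref{inv} with $C = C(k,d,\rho_0)$. The only genuinely delicate point is this last step for quadrilateral/hexahedral cells, where $F_\elem$ is not affine and $\det DF_\elem$ is non-constant: one must use shape-regularity to guarantee that the Jacobian does not degenerate within the cell so that the displayed ratio stays uniformly bounded. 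For simplices $F_\elem$ is affine, $\det DF_\elem$ is constant, and the argument is entirely routine; alternatively, one may simply invoke the corresponding inverse-inequality result from \cite{PietroErn,ErnGuermond}.
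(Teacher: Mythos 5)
Your proof is correct: the reference-element scaling argument combined with norm equivalence on a fixed finite-dimensional polynomial space and shape-regularity bounds on $DF_\elem^{-1}$ and $\det DF_\elem$ is the standard proof of this inverse inequality, and you rightly flag the only delicate point (non-degeneracy of the multilinear Jacobian on quadrilateral/hexahedral cells). The paper does not reprove the lemma but simply cites \cite[Lem.~1.44]{PietroErn}, which establishes the result by essentially this same argument, so your proposal matches the intended justification.
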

\begin{proof}
 This is \cite[Lem.~1.44]{PietroErn}.
\end{proof}
\begin{lemma}[Discrete trace inequality]
 Let $({\T_\H})_{\H \in \mathcal{I}}$ be a regular mesh sequence. There exists a constant $C>0$ such that for all $\H \in \mathcal{I}$, all $v \in \IP_k(\T_\H)$, all $\elem \in \T_\H$, and all $F \in \F$ with $F \subset \partial \elem$ 
 \begin{equation}\label{tr}
  \| \varphi \|_{L^2(F)} \leq C~ h^{-1/2}_\elem \|\varphi\|_{L^2(\elem)}.
 \end{equation}
\end{lemma}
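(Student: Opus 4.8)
The plan is to run the standard scaling-to-the-reference-element argument. Since $\T_\H$ consists of simplices and/or quadrilaterals/hexahedrons, there are only finitely many shapes of reference cell $\hat\elem$ to consider, and on each of them only finitely many faces $\hat F \subset \partial\hat\elem$. On the finite-dimensional space $\IP_k(\hat\elem)$ the trace map $\hat\varphi \mapsto \hat\varphi|_{\hat F} \in L^2(\hat F)$ is linear, so by equivalence of norms on finite-dimensional spaces there is a constant $\hat C = \hat C(k,\hat\elem,\hat F)$ with $\|\hat\varphi\|_{L^2(\hat F)} \le \hat C\, \|\hat\varphi\|_{L^2(\hat\elem)}$ for all $\hat\varphi \in \IP_k(\hat\elem)$. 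Taking the maximum over the (finitely many) admissible pairs $(\hat\elem,\hat F)$, we may assume $\hat C$ depends on $k$ only.

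Next, for a given $\elem \in \T_\H$ and $F \subset \partial\elem$, let $T_\elem \colon \hat\elem \to \elem$ be the diffeomorphism from the mesh construction (affine in the simplicial case), chosen so that it maps a reference face $\hat F$ onto $F$, and set $\hat\varphi \coloneqq \varphi|_\elem \circ T_\elem$. In the affine case $\hat\varphi \in \IP_k(\hat\elem)$, and the changes of variables yield
\[
 \|\varphi\|^2_{L^2(\elem)} = |\det J_\elem|\, \|\hat\varphi\|^2_{L^2(\hat\elem)}, \qquad \|\varphi\|^2_{L^2(F)} = c_F\, \|\hat\varphi\|^2_{L^2(\hat F)},
\]
where $J_\elem = DT_\elem$ and $c_F$ is the (constant) surface Jacobian of $T_\elem|_{\hat F}$. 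Combining these two identities with the reference estimate of the previous paragraph gives $\|\varphi\|^2_{L^2(F)} \le \hat C^2\, c_F\, |\det J_\elem|^{-1}\, \|\varphi\|^2_{L^2(\elem)}$, so it only remains to bound $c_F\,|\det J_\elem|^{-1}$ by $C\, h_\elem^{-1}$.

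This last bound is exactly where the \emph{regularity} of the mesh sequence (shape-regularity, in the sense referenced in the paper) is used: it gives $|\det J_\elem| \gtrsim h_\elem^d$ and $c_F \lesssim h_\elem^{d-1}$ with constants depending only on the regularity parameter, $d$, and the element type, hence $c_F\,|\det J_\elem|^{-1} \lesssim h_\elem^{-1}$; taking square roots yields \eqref{tr}. I expect the only genuinely delicate point to be the non-simplicial case: when $\elem$ is a general quadrilateral/hexahedron, $T_\elem$ is multilinear rather than affine, so $\hat\varphi$ need not lie in $\IP_k(\hat\elem)$ and $c_F$ is not constant along $F$; one then either restricts to the parallelepiped case or, for bilinear/trilinear cells, replaces the pointwise identities above by two-sided bounds on $\|J_\elem\|$ and $\|J_\elem^{-1}\|$ that again follow from regularity. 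Since the statement is classical, an alternative is simply to cite \cite[Lem.~1.46]{PietroErn}, as was done for the inverse inequality \eqref{inv}.
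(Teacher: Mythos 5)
The paper does not actually prove this lemma: its ``proof'' is the one-line citation to \cite[Lem.~1.46]{PietroErn}. Your write-up therefore supplies the argument that the paper outsources, and it is the correct, standard one: equivalence of norms on the finite-dimensional space $\IP_k(\hat\elem)$ gives the trace bound on the reference cell, the (affine) change of variables transports it to $\elem$ and to $F$, and shape-regularity yields $c_F\,|\det J_\elem|^{-1}\lesssim h_\elem^{-1}$, whence the $h_\elem^{-1/2}$ scaling. You also correctly isolate the only genuinely delicate point, namely non-affine quadrilateral/hexahedral cells, where $\varphi\circ T_\elem$ need not lie in $\IP_k(\hat\elem)$ and the surface Jacobian is not constant; the cited reference sidesteps this by working with admissible mesh sequences possessing a matching simplicial submesh, which is in the same spirit as the fix you sketch (two-sided Jacobian bounds from regularity). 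Since you close by noting that one may simply cite \cite[Lem.~1.46]{PietroErn} --- which is exactly what the paper does --- there is nothing to correct; the only blemish is in the paper's own statement, which quantifies over $v\in\IP_k(\T_\H)$ but writes the inequality for $\varphi$.
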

\begin{proof}
 This is \cite[Lem.~1.46]{PietroErn}.
\end{proof}
\begin{lemma}[Continuous trace inequality]
 Let $({\T_\H})_{\H \in \mathcal{I}}$ be a regular mesh sequence. There exists a constant $C>0$ such that for all $\H \in \mathcal{I}$, all $\elem \in \T_\H$, all $v \in H^1(\elem)$, and all $F\in\F$ with $F \subset \partial \elem$
 \begin{equation}\label{ctr}
  \|v\|_{L^2(F)}^2 \leq C~\left(\vert v\vert_{H^1(\elem)} + h_\elem^{-1}\|v\|_{L^2(\elem)}\right)\|v\|_{L^2(\elem)}.
 \end{equation}
\end{lemma}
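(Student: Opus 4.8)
The plan is to use the classical scaling argument, reducing everything to a fixed reference cell by means of the geometric map and exploiting that the mesh sequence is regular. Fix $\elem \in \T_\H$ and let $T_\elem \colon \hat\elem \to \elem$ be the bijection from the reference simplex/cube, with Jacobian $J_\elem$. Shape-regularity gives $\|J_\elem\| \lesssim h_\elem$, $\|J_\elem^{-1}\| \lesssim h_\elem^{-1}$, $|\det J_\elem| \sim h_\elem^{d}$, and the face $\hat F \coloneqq T_\elem^{-1}(F)$ carries a surface measure with $ds_F \sim h_\elem^{d-1}\, ds_{\hat F}$, all with constants depending only on the shape-regularity parameter, hence uniform over $\elem$ and over the mesh sequence. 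Writing $\hat v \coloneqq v \circ T_\elem$, the change of variables together with the chain rule $\nabla v = J_\elem^{-\mathsf T}(\hat\nabla \hat v)\circ T_\elem^{-1}$ yields the two-sided bounds $\|v\|_{L^2(\elem)} \sim h_\elem^{d/2}\|\hat v\|_{L^2(\hat\elem)}$, $\|v\|_{L^2(F)}^2 \sim h_\elem^{d-1}\|\hat v\|_{L^2(\hat F)}^2$, and $|\hat v|_{H^1(\hat\elem)} \lesssim h_\elem^{1-d/2}\,|v|_{H^1(\elem)}$.

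The only genuinely analytic ingredient is the $h$-independent multiplicative trace inequality on the reference cell, $\|\hat v\|_{L^2(\hat F)}^2 \le \hat C\big(|\hat v|_{H^1(\hat\elem)}\|\hat v\|_{L^2(\hat\elem)} + \|\hat v\|_{L^2(\hat\elem)}^2\big)$ for all $\hat v \in H^1(\hat\elem)$. By density of $C^\infty(\bar{\hat\elem})$ in $H^1(\hat\elem)$ it suffices to treat smooth $\hat v$. One picks a fixed vector field $\hat\phi$ on $\hat\elem$ with $\hat\phi\cdot\hat n$ bounded below by a positive constant on $\hat F$ and vanishing (or at least nonnegative) on the remaining faces of $\partial\hat\elem$: for a simplex, $\hat\phi(\hat x) = \hat x - \hat x_F$ with $\hat x_F$ the vertex opposite $\hat F$ works (it is parallel to every face through $\hat x_F$, and its normal component on $\hat F$ equals the corresponding height), and for a cube one may take $\hat\phi = x_1 e_1$ when $\hat F = \{x_1 = 1\}$. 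The divergence theorem then gives $\int_{\hat F}\hat v^2 \lesssim \int_{\hat\elem}|\nabla\cdot(\hat v^2\hat\phi)| \le 2\|\hat\phi\|_{L^\infty}\|\hat v\|_{L^2(\hat\elem)}|\hat v|_{H^1(\hat\elem)} + \|\nabla\cdot\hat\phi\|_{L^\infty}\|\hat v\|_{L^2(\hat\elem)}^2$, which is the claimed reference-cell estimate.

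Combining the two steps, substitute the reference-cell inequality into the scaling relations: $h_\elem^{d-1}\|\hat v\|_{L^2(\hat F)}^2 \lesssim \|v\|_{L^2(F)}^2$ on the left, and on the right $|\hat v|_{H^1(\hat\elem)}\|\hat v\|_{L^2(\hat\elem)} \lesssim h_\elem^{1-d/2}|v|_{H^1(\elem)}\cdot h_\elem^{-d/2}\|v\|_{L^2(\elem)} = h_\elem^{1-d}|v|_{H^1(\elem)}\|v\|_{L^2(\elem)}$ and likewise $\|\hat v\|_{L^2(\hat\elem)}^2 \lesssim h_\elem^{-d}\|v\|_{L^2(\elem)}^2$. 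Multiplying through by $h_\elem^{d-1}$ produces exactly $\|v\|_{L^2(F)}^2 \le C\big(|v|_{H^1(\elem)} + h_\elem^{-1}\|v\|_{L^2(\elem)}\big)\|v\|_{L^2(\elem)}$ with $C$ depending only on the reference cells and the shape-regularity constant. (This estimate is classical and can also be found in \cite{PietroErn}.)

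I expect the main obstacle to be bookkeeping rather than ideas: one must track every power of $h_\elem$ arising from the volume element, the surface element, and the chain-rule factor $\|J_\elem^{-1}\|$ consistently, and invoke shape-regularity at each point where a constant must be uniform in $\elem$ and in the mesh sequence. The reference-cell multiplicative trace inequality, proved by the divergence-theorem computation above together with density of smooth functions, is the single substantive analytic point.
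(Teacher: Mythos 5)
Your proof is correct. Note that the paper does not actually prove this lemma: it simply cites \cite[Lem.~1.49]{PietroErn}, so your argument is a self-contained substitute rather than a reproduction. The analytic core you use --- the divergence theorem applied to $\hat v^2\hat\phi$ for a vector field $\hat\phi$ whose normal component is bounded below on the target face and nonnegative on the remaining faces --- is exactly the mechanism behind the cited result; the difference is that Di Pietro and Ern apply it directly on the physical simplex $\elem$ (with $\phi(\vec x)$ proportional to $\vec x - \vec x_F$, $\vec x_F$ the vertex opposite $F$, so that $\|\phi\|_{L^\infty(\elem)}$ and $\nabla\cdot\phi$ carry the factors $1$ and $h_\elem^{-1}$ via $|F|/|\elem|$ and shape regularity), whereas you first pull back to the reference cell and then rescale. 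The direct version avoids all Jacobian bookkeeping and, in particular, sidesteps a small gap in your write-up: for bilinearly/trilinearly mapped quadrilaterals/hexahedra the map $T_\elem$ is not affine, so $J_\elem$ is not constant and the two-sided relations $\|v\|_{L^2(\elem)}^2\sim h_\elem^{d}\|\hat v\|_{L^2(\hat\elem)}^2$, $ds_F\sim h_\elem^{d-1}ds_{\hat F}$ require uniform pointwise bounds on $J_\elem$, $J_\elem^{-1}$, and $\det J_\elem$ over $\hat\elem$, which follow from (but must be extracted from) the regularity assumption on the mesh family. Your reference-element route, in exchange, makes the uniformity of the constant in $\H$ and $\elem$ completely transparent, since the only nontrivial inequality lives on a fixed domain. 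Either way the result and the powers of $h_\elem$ come out right.
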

\begin{proof}
 This is \cite[Lem.~1.49]{PietroErn}.
\end{proof}
\begin{lemma}[Approximation property]
 Let $({\T_\H})_{\H \in \mathcal{I}}$ be a regular mesh sequence. There exists a constant $C > 0$ such that for all $v \in H^{k+1}(\Omega)$, all $\H \in \mathcal I$, and all $\elem \in \T_\H$
 \begin{align}
  |\Pi^k_\H v - v|_{H^m(\elem)} &\leq C~h_\elem^{k+1-m} |v|_{H^{k+1}(\elem)}\quad \text{for }k\in \IN \cup \{0\},\label{apprPi}\\
  |I^k_\H v - v|_{H^m(\elem)} &\leq C~h_\elem^{k+1-m} |v|_{H^{k+1}(\elem)}\quad \text{for }k \in \IN \setminus \{0\}.\label{apprInt}
 \end{align}
\end{lemma}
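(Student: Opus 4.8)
The plan is to prove both estimates by the classical Bramble--Hilbert argument combined with a scaling to a reference element, and then to note that the result may alternatively be quoted verbatim from \cite{PietroErn} or \cite{ErnGuermond}. First I would fix $\elem \in \T_\H$ and let $T_\elem \colon \hat \elem \to \elem$ be the geometric map onto the reference simplex or unit cube $\hat \elem$. Regularity of the mesh sequence gives the uniform bounds $\|DT_\elem\|_{L^\infty} \le C\, h_\elem$, $\|DT_\elem^{-1}\|_{L^\infty} \le C\, h_\elem^{-1}$, and $c\, h_\elem^d \le |\det DT_\elem| \le C\, h_\elem^d$ with constants independent of $\elem$ and $\H$, hence the standard seminorm transformation rules
\[
 |v|_{H^s(\elem)} \le C\, h_\elem^{-s + d/2}\, |\hat v|_{H^s(\hat \elem)}, \qquad |\hat v|_{H^s(\hat \elem)} \le C\, h_\elem^{s - d/2}\, |v|_{H^s(\elem)} \qquad (0 \le s \le k+1),
\]
where $\hat v \coloneqq v \circ T_\elem$.

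Next, on the reference element let $\hat \Pi^k$ (respectively $\hat I^k$) denote the operator corresponding to $\Pi^k_\H$ (respectively $I^k_\H$) under pullback; this is legitimate because polynomials of degree $\le k$ pull back to polynomials of the same degree and the interpolation nodes of $\elem$ are the images of those of $\hat \elem$. In both cases the operator is a bounded linear endomorphism of $H^{k+1}(\hat \elem)$ that reproduces $\IP_k(\hat \elem)$; for the nodal interpolant boundedness uses the Sobolev embedding $H^{k+1}(\hat \elem) \hookrightarrow C^0(\hat \elem)$, valid for $k+1 > d/2$, i.e.\ for every $k \ge 1$ when $d \le 3$ --- this is precisely why \eqref{apprInt} is stated only for $k \in \IN \setminus \{0\}$. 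Since $\mathrm{Id} - \hat \Pi^k$ annihilates $\IP_k(\hat \elem)$, for every $p \in \IP_k(\hat \elem)$ one has $|\hat v - \hat \Pi^k \hat v|_{H^m(\hat \elem)} = |(\mathrm{Id} - \hat \Pi^k)(\hat v - p)|_{H^m(\hat \elem)} \le C\, \|\hat v - p\|_{H^{k+1}(\hat \elem)}$, and taking the infimum over $p \in \IP_k(\hat \elem)$ together with the Deny--Lions/Bramble--Hilbert lemma yields $|\hat v - \hat \Pi^k \hat v|_{H^m(\hat \elem)} \le C\, |\hat v|_{H^{k+1}(\hat \elem)}$; the same chain of inequalities applies to $\hat I^k$.

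Finally I would combine the two ingredients: using that $\Pi^k_\H v$ (respectively $I^k_\H v$) restricted to $\elem$ is the pullback of $\hat \Pi^k \hat v$ (respectively $\hat I^k \hat v$),
\[
 |\Pi^k_\H v - v|_{H^m(\elem)} \le C\, h_\elem^{-m + d/2}\, |\hat \Pi^k \hat v - \hat v|_{H^m(\hat \elem)} \le C\, h_\elem^{-m + d/2}\, |\hat v|_{H^{k+1}(\hat \elem)} \le C\, h_\elem^{k+1-m}\, |v|_{H^{k+1}(\elem)},
\]
and identically with $I^k_\H$ in place of $\Pi^k_\H$. I expect the only genuinely delicate point to be the scaling step on quadrilateral/hexahedral cells, where the multilinear map $T_\elem$ has a non-constant Jacobian so the transformation rules above must be justified from shape-regularity rather than from affine equivalence; the restriction $k \ge 1$ for the nodal interpolant, needed for point evaluations to be well defined, is the other item worth making explicit, and both are handled above.
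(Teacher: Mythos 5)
Your proposal is correct, and it is the standard Bramble--Hilbert/scaling proof of these estimates. The paper itself does not prove the lemma: it simply quotes \eqref{apprPi} from \cite[Theo.~3.29]{KnabnerAngermann} and \eqref{apprInt} from \cite[Lem.~1.58]{PietroErn}, and the proofs in those references are essentially the argument you reconstruct (pullback to a reference element, uniform boundedness of the reference operator on $H^{k+1}(\hat\elem)$, polynomial invariance, Deny--Lions, and the seminorm scaling rules furnished by mesh regularity). You also correctly identify why \eqref{apprInt} excludes $k=0$: the nodal interpolant needs $H^{k+1}(\hat\elem)\hookrightarrow C^0(\hat\elem)$, which for $d\le 3$ requires $k\ge 1$. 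The one place where your sketch is thinner than the closing sentence suggests is the quadrilateral/hexahedral case: there the geometric map is multilinear, so polynomials on $\elem$ do \emph{not} pull back to polynomials on $\hat\elem$ of the same degree, and the pullback of the $L^2$-projection is an oblique projection with respect to a Jacobian-weighted inner product rather than the reference $L^2$-projection. You flag this but do not actually resolve it; the clean fix is either to restrict to affinely equivalent (parallelogram/parallelepiped) cells, or to run the Bramble--Hilbert argument for a shape-regular \emph{family} of weighted projections on $\hat\elem$ with uniformly bounded norms, which is what the cited references effectively do. This is a presentational gap rather than a mathematical one for the setting the paper uses.
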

\begin{proof}
 The first inequality is \cite[Theo.~3.29]{KnabnerAngermann}. The second inequality is \cite[Lem.~1.58]{PietroErn}.
\end{proof}
\begin{lemma}
 Let $({\T_\H})_{\H \in \mathcal{I}}$ be a regular mesh sequence. There exists a constant $C>0$ such that for all $v \in W^{1,\infty}(\Omega)$, all $\H \in \mathcal I$, and all $\elem \in \T_\H$
 \begin{align}
  |\Pi^0_\H v\|_{L^\infty(\elem)} &\leq  \|v\|_{L^\infty(\elem)},\label{pi0stab}\\
  \|\Pi^0_\H v - v\|_{L^\infty(\elem)} &\leq h_\elem |v|_{W^{1,\infty}(\elem)}.\label{pi0conv}
 \end{align}
\end{lemma}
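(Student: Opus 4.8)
The plan is to use the explicit description of $\Pi^0_\H$: on each cell $\elem \in \T_\H$ the $L^2$-projection onto $\IP_0(\T_\H)$ reduces to the constant equal to the element mean, $\bar v_\elem \coloneqq |\elem|^{-1}\int_\elem v \dx$. Hence $\|\Pi^0_\H v\|_{L^\infty(\elem)} = |\bar v_\elem|$ and $\|\Pi^0_\H v - v\|_{L^\infty(\elem)} = \sup_{\vec x \in \elem}|\bar v_\elem - v(\vec x)|$, and since $W^{1,\infty}(\elem) \hookrightarrow C(\bar \elem)$ the pointwise values of $v$ appearing here make sense. Both claims then follow by comparing the mean with pointwise values of $v$.

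For \eqref{pi0stab} I would simply bound the average by the supremum,
\begin{equation*}
 |\bar v_\elem| ~\le~ \frac{1}{|\elem|}\int_\elem |v| \dx ~\le~ \|v\|_{L^\infty(\elem)},
\end{equation*}
and conclude using that $\Pi^0_\H v$ is constant on $\elem$.

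For \eqref{pi0conv} I would fix $\vec x \in \elem$ and write $\bar v_\elem - v(\vec x) = |\elem|^{-1}\int_\elem \bigl(v(\vec y) - v(\vec x)\bigr)\,d\vec y$. Using that the admissible cells (simplices and quadrilaterals/hexahedrons) are convex, the segment $[\vec x,\vec y]$ lies in $\elem$, so the fundamental theorem of calculus along this segment together with $|\vec y - \vec x| \le h_\elem$ gives $|v(\vec y) - v(\vec x)| \le |v|_{W^{1,\infty}(\elem)}\,|\vec y - \vec x| \le h_\elem\,|v|_{W^{1,\infty}(\elem)}$. Inserting this estimate into the integral and taking the supremum over $\vec x \in \elem$ yields \eqref{pi0conv}.

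The argument is entirely elementary and I do not expect a genuine obstacle; the only point worth noting is that the constant-free form of both bounds hinges on $\Pi^0_\H v$ being exactly the element mean and on convexity of the mesh cells for the straight-line estimate in \eqref{pi0conv}. For the cell shapes considered in this work this is automatic, whereas for non-convex cells one would only obtain the estimates up to a shape-regularity constant.
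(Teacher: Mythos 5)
Your proof is correct. For \eqref{pi0stab} it coincides with the paper's argument: $\Pi^0_\H v$ is the element mean, which is bounded by the essential supremum. For \eqref{pi0conv} the paper does not give an argument at all but delegates to standard approximation results in the literature (Knabner--Angermann, Theorems 3.24 and 3.26), whereas you supply a short self-contained proof: writing $\bar v_\elem - v(\vec x)$ as an average of differences $v(\vec y)-v(\vec x)$ and using convexity of the admissible cells together with the Lipschitz characterization of $W^{1,\infty}$ on convex domains. This buys a completely elementary, citation-free argument with an explicit constant; the cited route is more general (it would also cover non-convex or merely shape-regular reference configurations, at the price of a generic constant $C$). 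The only point worth flagging is that your constant-$1$ bound $|v(\vec y)-v(\vec x)|\le |v|_{W^{1,\infty}(\elem)}\,|\vec y-\vec x|$ presupposes that the seminorm is taken as the essential supremum of the Euclidean norm of $\nabla v$; under a componentwise convention a dimension-dependent factor appears, which is harmless since the lemma is stated with a generic constant anyway.
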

\begin{proof}
 The first inequality is the observation that $\Pi^0_\H v$ is the element-wise mean of $v$ which needs to be smaller than or equal to its essential maximum. The second inequality is a simple combination of  \cite[Theo.~3.24 \& 3.26]{KnabnerAngermann}.
\end{proof}

Next, we formulate our main result.
\begin{theorem}\label{TH:convergence}
 Let $(\T_{\H|h})$ be a weakly quasi-uniform mesh (double) sequence, and let $u \in H^1(0,T;H^{k+1}(\Omega))$, $k \in \IN \setminus \{0\}$. Then, the EG approximation $U$ converges in $L^\infty(L^2)$ to the analytical solution $u$, i.e., there exists $C$ independent of $\H$ and $h$ such that
 \begin{equation*}
  \| u - U \|_{L^\infty(0,T;L^2(\Omega))}^2 \le C \sum_{\elem \in \T_\H} h_{\H|\elem}^{2\tilde m} \H_{\elem}^{2(k - m)} | u |^2_{H^1(0,T;H^{k+1}(\elem))}
 \end{equation*}
 with $\tilde m = m + 1/2$ for simplicial meshes and $\ell \ge k - 1$ or general meshes and $\ell = k$ (i.e. in the case of DG). Otherwise, $\tilde m = m$.
\end{theorem}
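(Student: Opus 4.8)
The plan is to follow the classical energy-based convergence analysis for DG methods for linear advection (cf. Di Pietro--Ern), but carried out with the specially tailored projection $\bpi$ in place of the $L^2$-projection, and with the two-mesh bookkeeping that separates coarse cells in $\mathcal S_\H$ from refined cells grouped under their coarse parent. First I would split the error as $u - U = (u - \bpi u) + (\bpi u - U) =: \rho + \eta$, where $\eta \in V^k_{\ell,m}$. The term $\rho$ is controlled purely by approximation theory: using \eqref{bpi}, the triangle inequality, the best-approximation property \eqref{bestappr} of $\orthProj$ tested against $I^k_\H u$, and the interpolation estimate \eqref{apprInt}, one gets local bounds on $\|\rho\|_{L^2(\elem)}$ and on face terms $\|\rho\|_{L^2(F)}$ via the trace inequalities \eqref{tr}, \eqref{ctr}; the two-mesh quasi-uniformity is what lets me write the refined-cell contributions in terms of $h_{\H|\elem}$ and $\H_\elem$ and collect them cellwise as in the statement. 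The split in the exponent ($\tilde m = m+1/2$ versus $\tilde m = m$) will come precisely from whether, on a given cell, the projection error is orthogonal to enough of the test space to kill the leading volume term $\int_\elem \rho\,\vec a\cdot\nabla\varphi$ — this is available when $\nabla\varphi|_\elem$ lies in $\IP_\ell(\T_\H)+\IP_m(\T_{\H|h})$, i.e. for simplices with $\ell \ge k-1$ (gradients of degree-$k$ polynomials have degree $k-1$) or in the pure DG case $\ell = k$; otherwise one loses the extra half power by estimating that term directly through an inverse inequality \eqref{inv}.

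The core of the argument is the estimate on $\eta$. I would test the error equation with $\varphi = \eta$: since $a(\cdot,\cdot)$ is consistent ($V^k_{\ell,m} \subset \IP_k(\T_h)$, as noted), $a(u - U, \eta) = 0$, hence $a(\eta,\eta) = -a(\rho,\eta)$. On the left, the same manipulation as in the stability theorem (integrate the convective volume term by parts, use $\jump{\eta^2} = 2\avg\eta\jump\eta$ and the upwind identity) produces $\tfrac12 \partial_t \|\eta\|_{L^2(\Omega)}^2 + \tfrac12 \int_\Sigma |\vec a\cdot\Nu_\Omega|\jump\eta^2\,d\sigma + \tfrac12\int_\Omega(\nabla\cdot\vec a)\eta^2\,d\vec x$, the boundary Dirichlet/flux data having dropped out because $\rho$ and the exact solution satisfy them consistently. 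On the right, $-a(\rho,\eta)$ has three pieces: the time-derivative term $\int_\Omega \partial_t\rho\,\eta$, the convective volume term $\sum_\elem \int_\elem \rho\,\vec a\cdot\nabla\eta$, and the upwind face term $\int_{\Sigma\setminus\Gamma_-}\upw\rho\,\vec a\cdot\jump\eta$. The time term is handled by $L^2$-Cauchy--Schwarz and the bound on $\|\partial_t\rho\|_{L^2}$ (here is where $u \in H^1(0,T;H^{k+1})$ enters — note $\partial_t$ commutes with $\bpi$). The face term is split with Young's inequality so that the $\jump\eta$ factor is absorbed into the good $\tfrac12\int_\Sigma|\vec a\cdot\Nu_\Omega|\jump\eta^2$ on the left (using $\Gamma_\text F\subset\{\vec a\cdot\Nu_\Omega\le-\delta\}$ and $\vec a\in W^{1,\infty}$ to bound $|\vec a\cdot\Nu|$ from above), leaving a $\|\rho\|_{L^2(F)}^2$-type term that the trace estimates convert into the cellwise bound. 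The convective volume term is where the $\tilde m$ dichotomy is used as described above. After absorbing, Gr\"onwall in time (to handle $\nabla\cdot\vec a$ and any exponential constants, exactly as in the stability proof), and using $\eta(0) = \bpi u_0 - U(0) = 0$ by the choice of initial data, one obtains $\|\eta\|_{L^\infty(0,T;L^2(\Omega))}^2$ bounded by the claimed right-hand side; combining with the bound on $\rho$ via the triangle inequality finishes the proof.

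The main obstacle I expect is the careful two-mesh bookkeeping in the face and volume terms: faces of $\T_{\H|h}$ come in three flavours — interior faces of the coarse mesh, faces internal to a refined coarse cell, and the "hanging" faces where a refined cell meets an unrefined neighbour — and the trace inequalities must be applied on the correct (smaller) element with the right power of $h_\elem$, then re-expressed through $h_{\H|\elem}$ and $\H_\elem$ using weak quasi-uniformity so that everything collapses into the single sum $\sum_{\elem\in\T_\H} h_{\H|\elem}^{2\tilde m}\H_\elem^{2(k-m)}|u|^2_{H^1(0,T;H^{k+1}(\elem))}$. Keeping track of exactly which combination of $k$, $\ell$, $m$ on which mesh controls each term — and verifying that the orthogonality of $\bpi$ (which only gives orthogonality against $\IP_\ell(\T_\H)+\IP_m(\T_{\H|h})$, not against all of $\IP_k(\T_h)$) is enough in each case — is the delicate part; the rest is a routine, if lengthy, energy estimate.
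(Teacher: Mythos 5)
Your overall route coincides with the paper's: the split $u-U=(u-\bpi u)+(\bpi u-U)$, testing the Galerkin-orthogonality relation with the discrete error component, absorbing the upwind face term into the jump term on the left via Young's inequality, Gr\"onwall in time, and two-mesh bookkeeping for the approximation error of $\bpi$. You also correctly locate the source of the $\tilde m=m+\tfrac12$ versus $\tilde m=m$ dichotomy in the convective volume term. However, one step fails as written: you claim that the orthogonality of $\bpi$ annihilates $\int_\elem \rho\,\vec a\cdot\nabla\eta$ whenever $\nabla\eta|_\elem$ has low enough polynomial degree. The orthogonality \eqref{orth} holds only against test functions in $\IP_\ell(\T_\H)+\IP_m(\T_{\H|h})$, and for a variable velocity $\vec a\in W^{1,\infty}$ the product $\vec a\cdot\nabla\eta$ is not a piecewise polynomial, so \eqref{orth} does not apply; your argument as stated is valid only for element-wise constant $\vec a$.

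The paper closes this gap by inserting $\pm\Pi^0_{\H|h}\vec a$ into the convective term, producing the two pieces $\Xi_4$ and $\Xi_5$. The piece $\Xi_5=\sum_\elem\int_\elem\theta_u\,\Pi^0_{\H|h}\vec a\cdot\nabla e_u$ has a genuinely polynomial factor $\Pi^0_{\H|h}\vec a\cdot\nabla e_u$ of degree $\le k-1$ (simplices) or $\le k$ (tensor-product elements), so \eqref{orth} kills it exactly under the stated conditions on $\ell$; when those conditions fail, it is bounded via \eqref{pi0stab} and the inverse inequality, which is what costs the half power. The remainder $\Xi_4=\sum_\elem\int_\elem\theta_u(\vec a-\Pi^0_{\H|h}\vec a)\cdot\nabla e_u$ is harmless because $\|\vec a-\Pi^0_{\H|h}\vec a\|_{L^\infty(\elem)}\le h_\elem|\vec a|_{W^{1,\infty}(\elem)}$ by \eqref{pi0conv}, and this factor of $h_\elem$ exactly cancels the $h_\elem^{-1}$ from the inverse inequality \eqref{inv} applied to $\nabla e_u$. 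Without this device your treatment of the volume term is either unjustified or loses a power of $h$ in all cases, destroying the dichotomy you are trying to establish. A second, smaller omission: to bound $|\theta_u|_{H^1(\elem)}$ on refined cells the paper introduces the auxiliary projection $\bpi^\star u=I^k_\H u+\Pi^m_{\H|h}(u-I^k_\H u)$ together with an inverse inequality, because the best-approximation property \eqref{bestappr} controls only $L^2$-norms of the $\orthProj$-error, not its $H^1$-seminorm; this is part of the ``routine bookkeeping'' you defer, but it does require an extra idea.
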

\begin{proof}
Defining
\begin{equation*}
 e_u \coloneqq U - \bpi u \quad \text{ and } \quad \theta_u \coloneqq \bpi u - u,
\end{equation*}
we have due to the consistency and since $e_u \in V^k_{\ell,m}$ that
\begin{equation*}
 a(e_u + \theta_u, e_u) = 0 \quad \text{ for almost every } t \in (0,T).
\end{equation*}
This can be rewritten as
\begin{align*}
 \frac{1}{2} \partial_t  \| e_u \|^2_{L^2(\Omega)} +& \frac{1}{2} \int_\Sigma | \vec a \cdot \Nu | \jump{e_u}^2 \ds 
 =  \underbrace{ - \frac{1}{2} \int_\Omega (\nabla \cdot \vec a) e_u^2 \dx }_{ \eqqcolon \Xi_1 } - \underbrace{ \int_{\Sigma \setminus \Gamma_-} \!\!\!\!\!\! \upw{\theta_u} \vec a \cdot\jump{e_u} \ds }_{ \eqqcolon \Xi_2 } - \underbrace{ \int_\Omega \partial_t \theta_u e_u \dx }_{ \eqqcolon \Xi_3 }\\
 & + \underbrace{ \sum_{\elem \in \T_{\H|h}} \int_\elem \theta_u (\vec a - \Pi_{\H|h}^0 \vec a) \cdot \nabla e_u \dx }_{ \eqqcolon \Xi_4 } + \underbrace{ \sum_{\elem \in \T_{\H|h}} \int_\elem \theta_u \Pi_{\H|h}^0 \vec a \cdot \nabla e_u \dx }_{ \eqqcolon \Xi_5 }.
\end{align*}
We can immediately deduce that:
\begin{itemize}
 \item If $\nabla \cdot \vec a \ge 0$ then $\Xi_1 \le 0$ holds, and this term can be moved to the left hand side and integrated into the energy norm. This is consistent with the continuous case, when the mass sinks lead to an increase in the stability.
 \item If $\vec a$ is element-wise constant, then $\Xi_4 \equiv 0$.
 \item If the mesh is simplicial, and the globally continuous polynomials are from the space $\mathcal P_k$, then $\Pi_{\H|h}^0 \vec a \cdot \nabla e_u \in \IP_{k-1}(\T_{\H}) + \IP_{m-1}(\T_{\H|h})$. Using \eqref{orth} yields $\Xi_5 \equiv 0$, provided that $\ell \ge k-1$.
 \item If the mesh is quadrilateral, and the globally continuous polynomials are from the space $\mathcal Q_k$, then $\Pi_{\H|h}^0 \vec a \cdot \nabla e_u \in \IP_{k}(\T_{\H}) + \IP_{m}(\T_{\H|h})$. Using \eqref{orth} yields $\Xi_5 \equiv 0$ provided that $\ell = k$, i.e., in the case of DG.
\end{itemize}
Next, we estimate terms $\Xi_{\cdot}$:
\begin{align*}
 |\Xi_1| & ~\le~ \frac{1}{2}~\| \nabla \cdot \vec a \|_{L^\infty(\Omega)} \| e_u \|^2_{L^2(\Omega)},\\
 |\Xi_2| & ~\le~ \int_{\Sigma \setminus \Gamma_-} \upw{ \theta_u }^2 \ds + \frac{1}{4}  \int_\Sigma | \vec a \cdot \Nu | \jump{e_u}^2 \ds\\
 & ~\le~ C~ \sum_{\elem \in \T_{\H|h}} \int_{\partial \elem} \theta_u^2 \ds + \frac{1}{4} \int_\Sigma | \vec a \cdot \Nu | \jump{e_u}^2 \ds\\
 & ~\overset{\eqref{ctr}}{\le}~ C \sum_{\elem \in \T_{\H|h}} \left(\vert \theta_u\vert_{H^1(\elem)} + h_\elem^{-1} \|\theta_u\|_{L^2(\elem)}\right) \| \theta_u \|_{L^2(\elem)}
   + \frac{1}{4} \int_\Sigma | \vec a \cdot \Nu | \jump{e_u}^2 \ds\\
 & ~\le~C ~\sum_{\elem \in \T_{\H|h}}  h_\elem \vert\theta_u\vert_{H^1(\elem)}^2  + ~C \sum_{\elem \in \T_{\H|h}} h_\elem^{-1}\|\theta_u\|_{L^2(\elem)}^2
  + \frac{1}{4} \int_\Sigma | \vec a \cdot \Nu | \jump{e_u}^2 \ds,\\
 |\Xi_3| & ~\le~ \frac{1}{2}~\| \partial_t \theta_u \|_{L^2(\Omega)} + \frac{1}{2}~\| e_u \|_{L^2(\Omega)},\\
 |\Xi_4| & ~\le~ \sum_{\elem \in \T_{\H|h}} \| \theta_u \|_{L^2(\elem)} \| \vec a - \Pi_{\H|h}^0 \vec a \|_{L^\infty(\elem)} \| \nabla e_u \|_{L^2(\elem)}\\
 & ~\overset{\eqref{tr},\eqref{pi0conv}}{\le}~ C~ \sum_{\elem \in \T_{\H|h}} \| \theta_u \|_{L^2(\elem)} h_\elem | \vec a |_{W^{1,\infty}(\elem)} h^{-1}_\elem \| e_u \|_{L^2(\elem)}\\
 & ~\le~ | \vec a |^2_{W^{1,\infty}(\Omega)} \sum_{\elem \in \T_{\H|h}} \| \theta_u \|^2_{L^2(\elem)} + C \sum_{\elem \in \T_{\H|h}} \| e_u \|^2_{L^2(\elem)},\\
 |\Xi_5| & ~\le~ \sum_{\elem \in \T_{\H|h}} \| \theta_u \|_{L^2(\elem)} \| \Pi_{\H|h}^0 \vec a \|_{L^\infty(\elem)} \| \nabla e_u \|_{L^2(\elem)}\\
 & ~\overset{\eqref{tr},\eqref{pi0stab}}{\le}~ C~ \sum_{\elem \in \T_{\H|h}} \| \theta_u \|_{L^2(\elem)} \| \vec a \|_{L^\infty(\elem)} h^{-1}_\elem \| e_u \|_{L^2(\elem)}\\
 & ~\le~ \| \vec a \|^2_{L^\infty(\Omega)} \sum_{\elem \in \T_{\H|h}} h^{-2}_\elem \| \theta_u \|^2_{L^2(\elem)} + C \sum_{\elem \in \T_{\H|h}} \| e_u \|^2_{L^2(\elem)}.
\end{align*}
This would give the desired result (after applying Gr\"onwall's inequality -- if needed) provided that we could find good estimates for the terms involving $\theta_u$, and $\vert \theta_u \vert_{H^1(\elem)}$. Note that only the norm $\| \nabla \cdot \vec a\|_{L^\infty(\Omega)}$ enters the exponential term in the Gr\"onwall estimate.

We consider the cases $\elem \in \mathcal S_h$ and $\elem \in \mathcal S_\H$ separately. In the first case, we can estimate
\begin{align*}
 &\sum^{\hat\elem \in \T_\H \setminus \mathcal S_\H} _{\mathcal S_h \ni \elem \subset \hat \elem} h^r_\elem~\| \bpi u - u \|^2_{L^2(\elem)} \le \sum_{\hat\elem \in \T_\H \setminus \mathcal S_\H} h^r_{\H|\hat \elem} \| \bpi u - u  \|^2_{L^2(\hat\elem)}\\
 &\qquad  =  \sum_{\hat\elem \in \T_\H \setminus \mathcal S_\H} h^r_{\H|\hat \elem} \| \orthProj (u - I^k_\H u) - (u - I^k_\H u) \|^2_{L^2(\hat \elem)}\\
 &\qquad  \overset{\eqref{bestappr}}{\leq} \sum_{\hat\elem \in \T_\H \setminus \mathcal S_\H} \ h^r_{\H|\hat \elem}\| \Pi^m_{\H|h} (u - I^k_\H u) - (u - I^k_\H u) \|^2_{L^2(\hat \elem)}\\
 &\qquad  = \sum^{\hat\elem \in \T_\H \setminus \mathcal S_\H}_{\mathcal S_h \ni \elem \subset \hat \elem} h^r_{\H|\hat \elem} \| \Pi^m_{\H|h} (u - I^k_\H u) - (u - I^k_\H u) \|^2_{L^2(\elem)}\\
 &\qquad  \overset{\eqref{apprPi}}{\leq}~C~ \sum_{\hat\elem \in \T_\H \setminus \mathcal S_\H} h_{\H|\hat \elem}^{2m + 2 + r}~ | u - I^k_\H u |^2_{H^{m+1}(\hat \elem)}\\
 &\qquad \overset{\eqref{apprInt}}{\leq}~ C~\sum_{\hat\elem \in \T_\H \setminus \mathcal S_\H} h_{\H|\hat \elem}^{2m + 2 + r}  {\H}_{\hat \elem}^{2k-2m}~ \vert u \vert_{H^{k+1}(\hat \elem)}^2.
\end{align*}

In the second case, we obtain for $\elem \in \mathcal S_\H$ using the same arguments
\begin{equation*}
 \sum_{\elem \in \mathcal S_\H} \H^r_\elem \| \bpi u - u \|^2_{L^2(\elem)} \le~C~\sum_{\elem \in \mathcal S_\H} \H^{2k + 2 + r}_\elem |u|^2_{H^{k+1}(\elem)}.
\end{equation*}

The estimate for $\vert \theta_u \vert_{H^1(\elem)}$ is conducted analogously. Here, the projection $\bpi^\star u \coloneqq I^k_\H u + \Pi^m_{\H|h} (u - I^k_\H u)$ is used to obtain
\begin{align*}
 \sum^{\hat\elem \in \T_\H \setminus \mathcal S_\H} _{\mathcal S_h \ni \elem \subset \hat \elem} h^r_\elem \vert \bpi u - u \vert^2_{H^1(\elem)}
 \overset{\eqref{inv}}{\leq}~C~\sum^{\hat\elem \in \T_\H \setminus \mathcal S_\H}_{\mathcal S_h \ni \elem \subset \hat \elem} \left(h_\elem^r\vert  \bpi^\star u - u \vert_{H^1(\elem)}^2 +h_\elem^{r-2} \|\bpi u - \bpi^\star u\|_{L^2(\elem)}^2  \right),
\end{align*}
which gives the needed estimate after inserting $\pm u$ into the second summand and redoing the aforementioned arguments. Collecting all terms gives the result.
\end{proof}
\begin{remark}
 This result is not optimal, since it uses high regularity of the temporal derivative. However, in the case of DG, i.e. $m = -1$ and $\ell = k$, the proof can be streamlined by replacing $\bpi$ (and $\bpi^\star$) by $\Pi^k_{\H}$---this also implies that the initial data is constructed using an~orthogonal projection with respect to the $L^2$-norm. Here, also the distinction between simplices and quadrilaterals/hexahedrons becomes unnecessary, and the polynomial approximation spaces may all be of $\mathcal P_k$ type. This results in $\Xi_3 = \Xi_5 = 0$ and yields the optimal estimate
 \begin{align*}
  \| u - U \|_{L^\infty(0,T;L^2(\Omega))}^2 ~\le~  C \sum_{\elem \in \T_\H} h_{\H|\elem}^{2m+1} \H_{\elem}^{2(k - m)} | u |^2_{L^2(0,T;H^{k+1}(\elem))}
  ~\le~  C \H^{2k+1} | u |^2_{L^2(0,T;H^{k+1}(\Omega))},
 \end{align*}
 where $u$ is only assumed to be an element of $L^2(0,T;H^{k+1}(\Omega))$.
\end{remark}

\section{Numerical results}\label{SEC:numerics}
\subsection{Analytical convergence test}

%
In order to verify the convergence of the numerical schemes, we use the method of manufactured solution.  On the domain $\Omega \coloneqq (0,1)\times (0,1)$ and the time interval $J \coloneqq(0,1/2)$, we define the analytical solution $u(t, x_1, x_2)$ and velocity filed $\vec a(t, x_1, x_2)$ by
\begin{align*}
u(t, x_1, x_2) = \cos(7 x_1) \cos(7 x_2) + \exp(-t), \qquad
\vec a(t,x_1,x_2) =
\begin{pmatrix}
\exp(x_1/2 + x_2/2)\\
\exp(x_1/2 - x_2/2)
\end{pmatrix}.
\end{align*}
The right-hand side $f$ of the problem is chosen so that $u$ and $\vec a$ satisfy \eqref{EQ:advection}. We prescribe Dirichlet boundary conditions on the inflow boundary, i.e., $\Gamma_\mathrm{D} \coloneqq \Gamma_-$, and use $u_\mathrm{D}\coloneqq u_{|_{(x_1, x_2)\in \Gamma_D}}$ and $u_0 \coloneqq u_{|_{t=0}}$.


Let $r$ and $R$ denote the refinement levels for the meshes with element sizes $h$ and $\mathcal H$, respectively.
The initial mesh ($R$=$r$=1) consisting of four triangles is obtained by diagonally subdividing $\Omega$; finer meshes are produced by connecting the edge midpoints of every triangle. As temporal discretization, we use an explicit SSP Runge--Kutta method with $s = k+1$ stages.

We utilize the EG method with polynomial orders $k$ and $\ell$ on the coarse grid (of refinement level $R$) enriched by the DG method of order at most $m$ on the fine grid (of refinement level $r$). 


Our implementation currently supports the approximation orders up to two. This yields four possible combinations of $k,\ell$, and $m$. In Table~\ref{TAB:100}, the $(r,R)$-th entry corresponds to the $L^2$-error at time $t = 1/2$ using the refinement levels $r$ and $R$. 



\begin{table}[ht]
\setlength{\tabcolsep}{3.5pt}
 \begin{tabular}{c|ccccc|ccccc}
  \toprule
space & \multicolumn{5}{c}{$k = 1$, $\ell=0$, $m=0$} & \multicolumn{5}{c}{$k=2$, $\ell=0$, $m=0$}  \\
  \midrule
  \diagbox{$r$}{$R$}
    & 1 & 2 & 3 & 4 & 5& 1 & 2 & 3 & 4 & 5\\
\midrule
  1 & 5.85E-01&	--- &	--- &	--- & ---
& 4.48E-01&	---&	---&	--- & ---\\
  2 & 5.42E-01&	2.85E-01&	--- &	--- & ---
& 2.95E-01&	7.50E-02&	---&	---& ---\\
  3 & 2.95E-01&	1.95E-01&	7.24E-02&	---& ---
& 2.21E-01&	7.39E-02&	1.06E-02&	---& ---\\
  4 & 1.60E-01&	1.10E-01&	7.37E-02&	1.88E-02&  ---
& 1.23E-01&	6.79E-02&	9.67E-03&	1.46E-03& ---\\
  5 & 8.40E-02&	5.78E-02&	4.18E-02&	1.99E-02&   4.80E-03
& 6.46E-02&	3.93E-02&	8.73E-03&	1.27E-03& 2.07E-04\\
  6 & 4.33E-02&	2.98E-02&	2.21E-02&	1.12E-02&   5.16E-03
& 3.33E-02&	2.10E-02&	5.03E-03&	1.17E-03& 1.59E-04\\
  7 & 2.21E-02&	1.53E-02&	1.15E-02&	5.90E-03&   2.89E-03
& 1.70E-02&	1.10E-02&	2.68E-03&	6.72E-04& 1.49E-04\\
  \midrule
space & \multicolumn{5}{c}{$k=2$, $\ell=1$, $m=0$} & \multicolumn{5}{c}{$k=2$, $\ell=1$, $m=1$}  \\
  \midrule
  \diagbox{$r$}{$R$}
    & 1 & 2 & 3 & 4 & 5& 1 & 2 & 3 & 4 & 5\\
\midrule
1 & 4.47E-01&	---&	---&	---& ---
& 4.47E-01&	---&	---&	---& ---\\
2 & 2.95E-01&	7.11E-02&	---&	---& ---
& 1.72E-01&	7.11E-02&	---&	---& ---\\
3 & 2.21E-01&	7.34E-02&	9.80E-03&	---& ---
& 6.62E-02&	5.96E-02&	9.80E-03&	---& ---\\
4 & 1.23E-01&	6.76E-02&	9.59E-03&	1.29E-03& ---
& 1.76E-02&	1.63E-02&	7.57E-03&	1.29E-03& ---\\
5 & 6.46E-02&	3.91E-02&	8.69E-03&	1.26E-03& 1.63E-04
& 4.51E-03&	4.24E-03&	2.10E-03&	1.00E-03& 1.63E-04\\
6 & 3.33E-02&	2.09E-02&	5.00E-03&	1.17E-03& 1.59E-04
& 1.15E-03&	1.09E-03&	5.41E-04&	2.79E-04& 1.27E-04\\
7 & 1.70E-02&	1.09E-02&	2.66E-03&	6.70E-04& 1.49E-04
& 3.29E-04&	2.77E-04&	1.37E-04&	7.18E-05& 3.56E-05\\
  \bottomrule
 \end{tabular}
 \caption{Analytical convergence test: $L^2$-errors for $V^k_{\ell,m}$, $0\leq m \leq \ell < k \leq 2$.}\label{TAB:100}
\end{table}

We observe that our analytical convergence rates are confirmed by the numerical tests; however, somewhat better convergence (by an order of ca. $1/2$) is apparent. This is a~well-known phenomenon also experienced in numerical experiments for the DG method on regular meshes. Moreover, the first subcell refinement step has the tendency to show a deteriorated rate of convergence -- presumably due to an~increasing constant when switching between the two branches in the proof of Theorem~\ref{TH:convergence} (discriminating between locally refined and not locally refined elements).

In Fig.~\ref{fig:convPlot}, one can see the respective convergence plots for different local refinement strategies. Note that the solutions for $V^2_{0,0}$ and $V^2_{1,0}$ are very similar, their error plots in Fig.~\ref{fig:convPlot} lie on top of each other. 
The error plots for the local refinements with $h = \H/4$ (dashed lines) and $h = 2\H^2$ (solid lines) are shown in Fig.~\ref{fig:convPlot} (left). We observe that the slopes of the error plots match (or exceed by ca. $1/2$) the convergence rates in Theorem \ref{TH:convergence}. 
In Fig.~\ref{fig:convPlot} (right), the convergence for fixed $\H = 1/2$ and successively refined $h$ is shown.  In line with Theorem \ref{TH:convergence}, we observe order of convergence one for numerical methods with $m = 0$ and order of convergence two for~$V^2_{1,1}$.
\begin{figure}[ht]
	\includegraphics[draft = false, width = \linewidth]{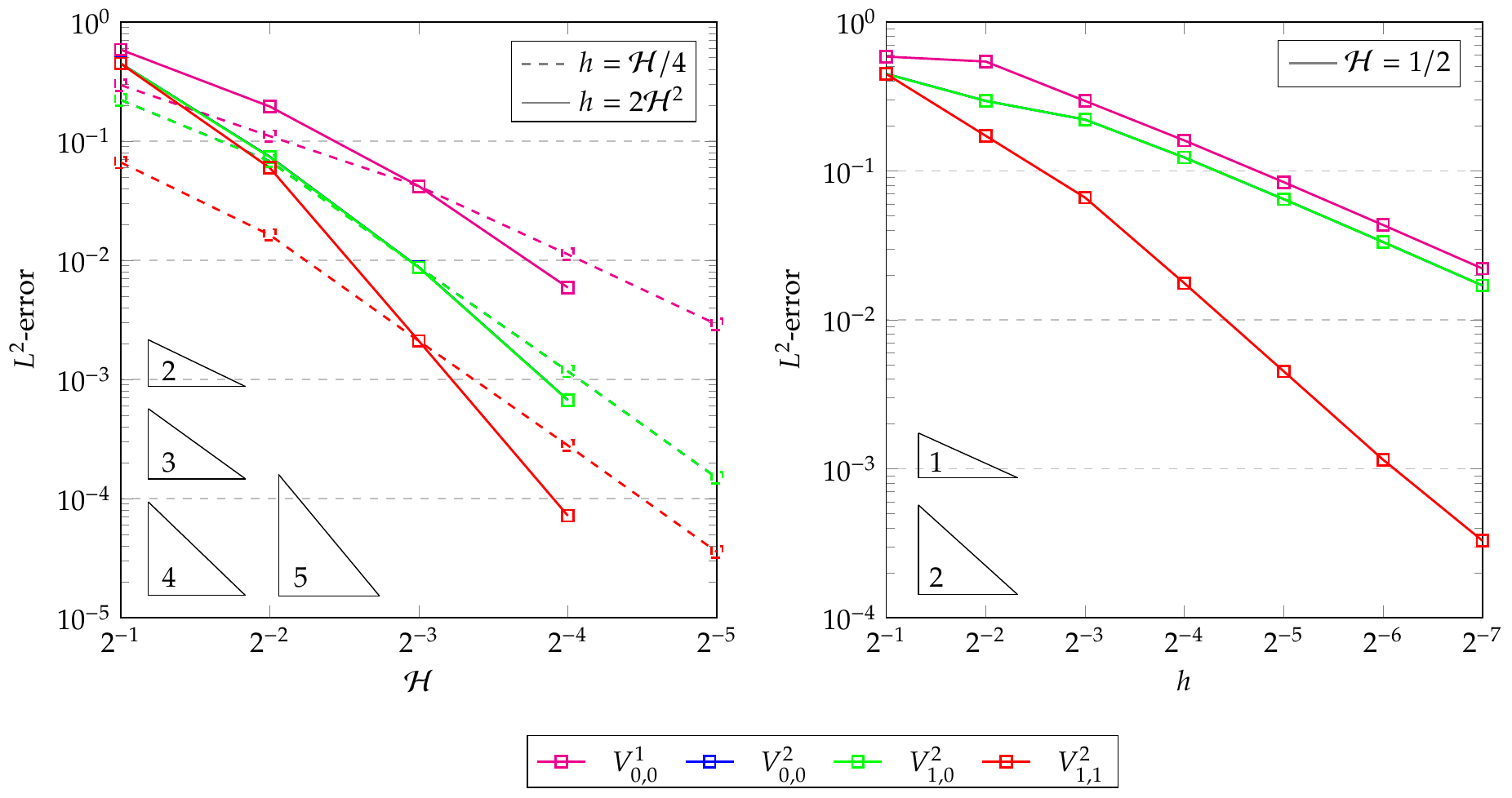}
	\caption{Analytical convergence test: Log plot of $L^2$-errors for $V^k_{\ell,m}$, $0\leq m \leq \ell < k \leq 2$ and different refinement strategies. }
	\label{fig:convPlot}
\end{figure}

\subsection{Solid body rotation}
As the next benchmark problem, we use solid body rotation test proposed by LeVeque~\cite{LeVeque1996}. It consists of a slotted cylinder, a sharp cone, and a smooth hump (see Fig.~\ref{fig:solidbodycoarsemesh} (right)) that are placed in a square domain $\Omega = (0, 1)\times (0,1)$ and transported by a time-independent velocity field 
\begin{equation*}
\vec a(t,x_1, x_2) =
\begin{pmatrix}
0.5- x_2\\x_1 - 0.5
\end{pmatrix}
\end{equation*}
in a counterclockwise rotation about $J = (0,2\pi)$. Using $r = 0.0225$ and $G(\vec x, \vec x_0) = \frac{1}{0.15}\|\vec x - \vec x_0\|_2$, we choose the following initial data
\small
\begin{equation*}
u_0(\vec{x}) = \left\{
\begin{array}{ll}
  1                                               & \text{if}\quad (x_1 - 0.5)^2 + (x_2 - 0.75)^2 \le r\;\land\;(x_1\le0.475 \lor x_1\ge0.525 \lor x_2\ge0.85)\\
  1-G\left(\vec{x},{0.5 \choose 0.25}\right)                       & \text{if}\quad (x_1 - 0.5)^2 + (x_2 - 0.25)^2 \le r\\[0.5ex]
  \frac{1}{4}\left(1+\cos\left(\pi G\left(\vec{x},{0.25 \choose 0.5}\right)\right)\right) & \text{if}\quad (x_1 - 0.25)^2 + (x_2 - 0.5)^2 \le r\\
  0                                               & \text{otherwise} 
\end{array}
\right.
\end{equation*}
\normalsize
At the inlet  $\Gamma_-$, we prescribe the Dirichlet boundary condition $u = 0$. The right-hand side of the advection equation is given by $f = 0$. In order to obtain a~discrete initial condition preserving the bounds of the analytical solution ($0 \le U_{|_{t=0}} \le 1$), we define $u_0$ using the $L^2$-projection into the space of piecewise constant functions instead of our special EG projection operator $\Pi^{\ell,m}_{\H|h}$.
\begin{figure}
\includegraphics[draft = false, height=0.35\textheight]{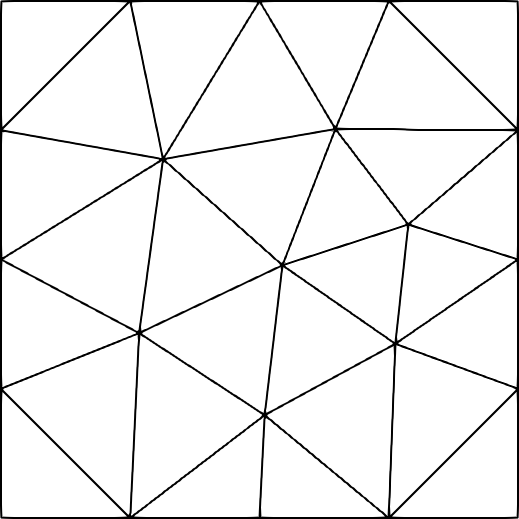}
\hspace{0.1\linewidth}
\includegraphics[draft = false, height=0.35\textheight, trim=120 20 110 60,clip]{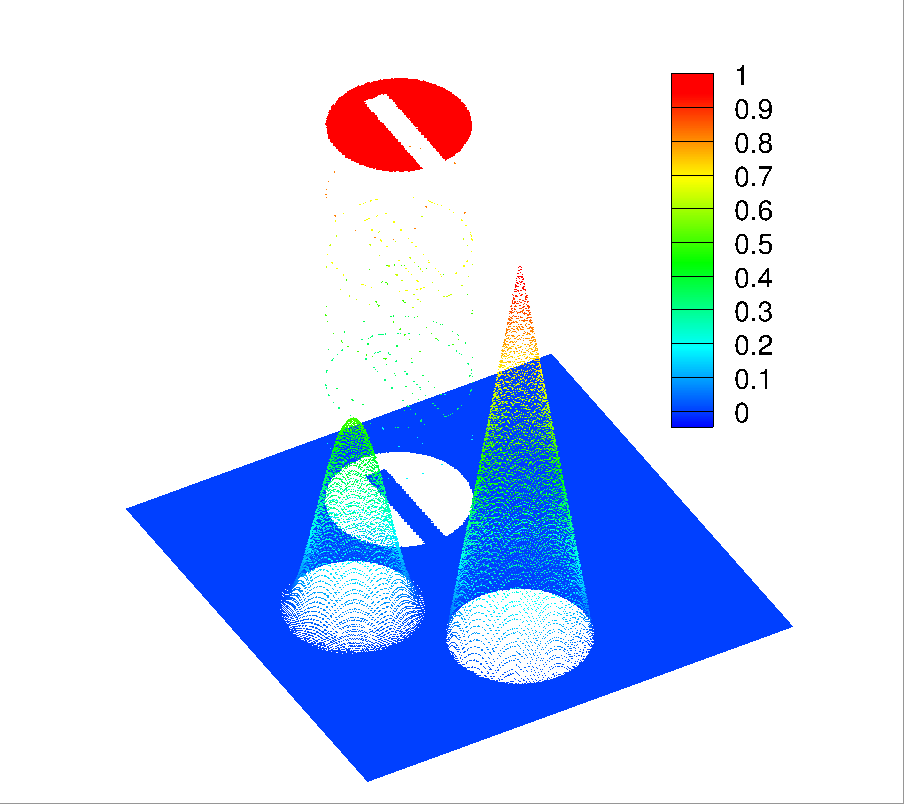}
\caption{Solid body rotation: Coarse mesh (left); initial condition projected on the space of element-wise constant polynomials on mesh with $R = 7$ (right).}
	\label{fig:solidbodycoarsemesh}
\end{figure}

\begin{figure}
	\begin{tabular}{cc}
		a) $R = 4, r = 4$,\;  $L^2$-err: 1.28E-01 & b) $R = 4, r = 5$,\;  $L^2$-err: 1.15E-01 \\
		\includegraphics[draft  = false, width=0.33\linewidth, trim=100 20 100 50,clip]{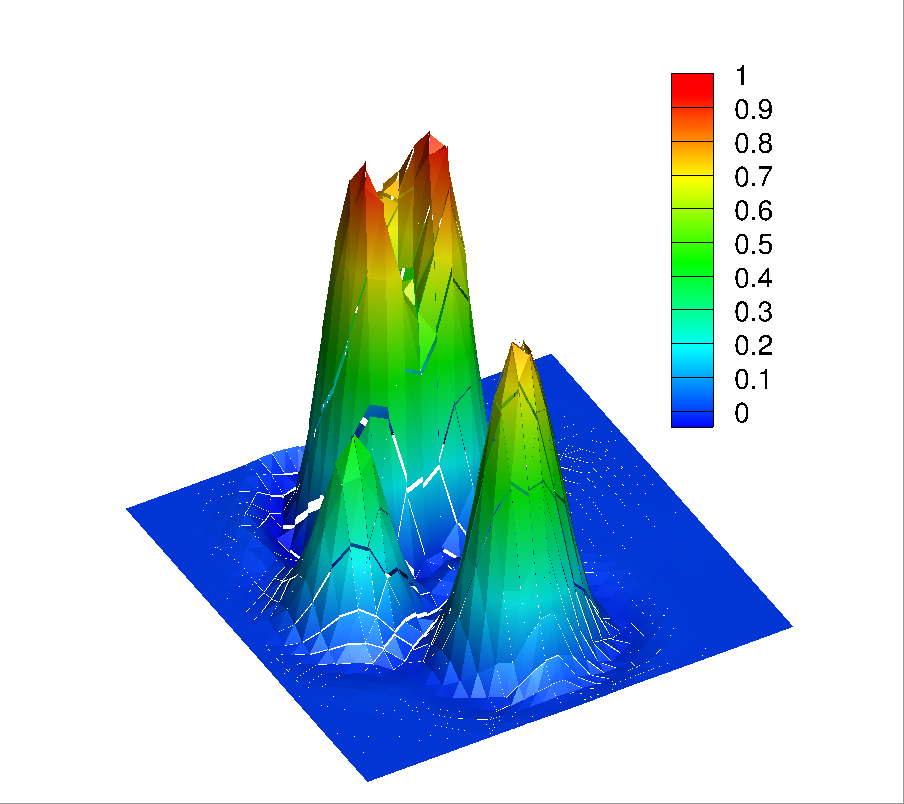} & 
		\includegraphics[draft = false, width=0.33\linewidth, trim=100 20 100 50,clip]{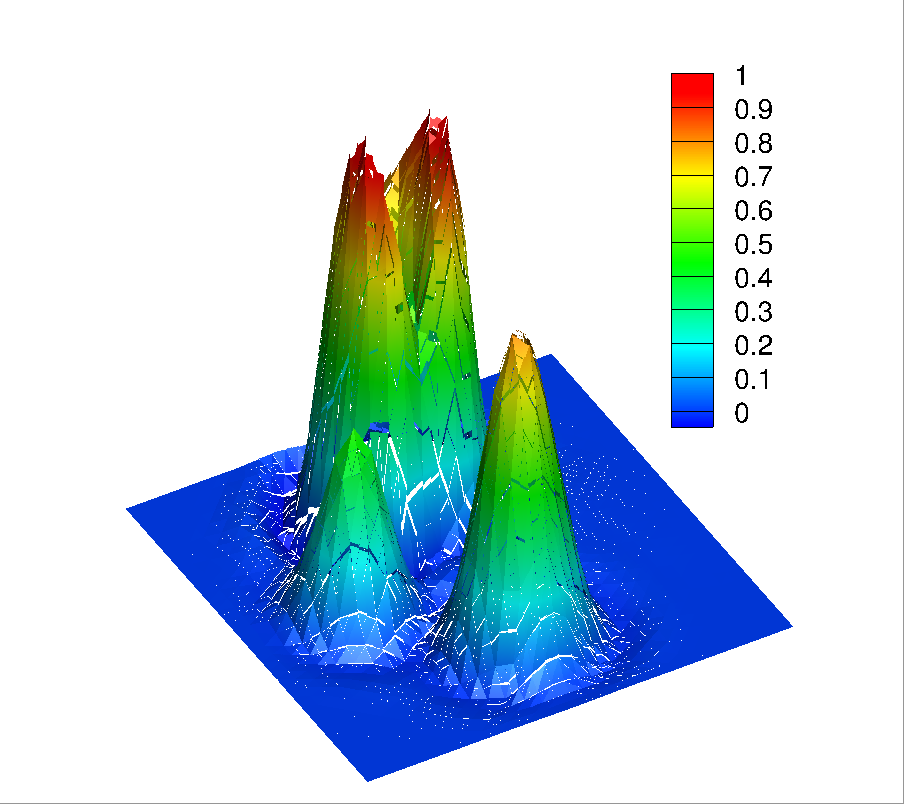} \\[1ex]
		c) $R = 4, r = 6$, \; $L^2$-err: 1.03E-01 & d) $R = 4, r = 7$,\;  $L^2$-err: 9.23E-02 \\
		\includegraphics[draft = false, width=0.33\linewidth, trim=100 20 100 50,clip]{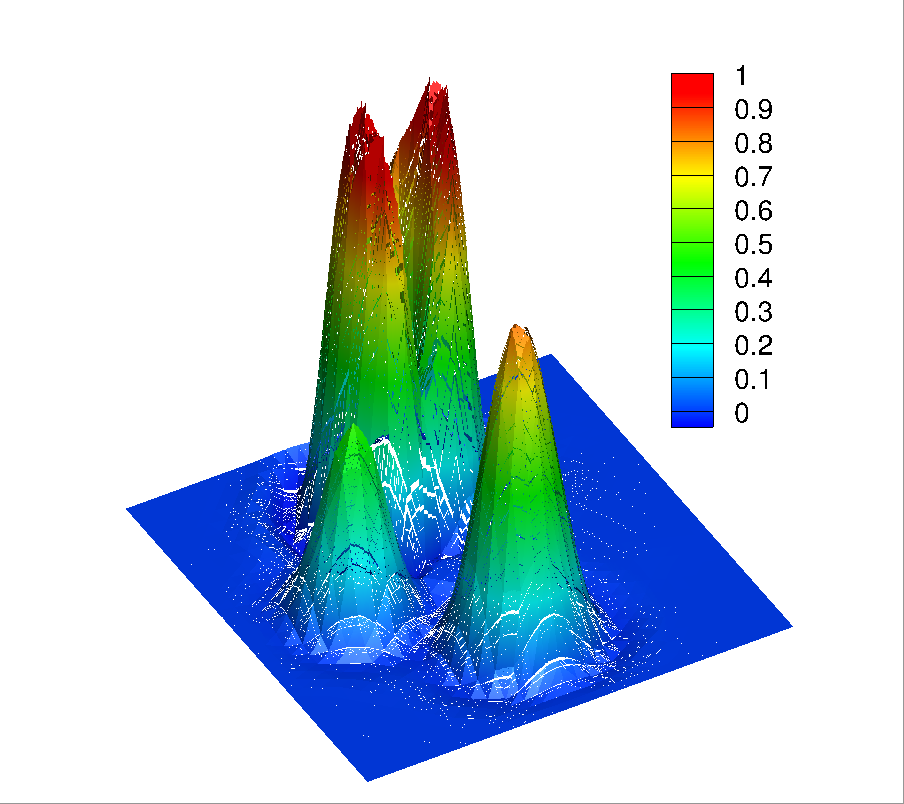} & 
		\includegraphics[draft = false, width=0.33\linewidth, trim=100 20 100 50,clip]{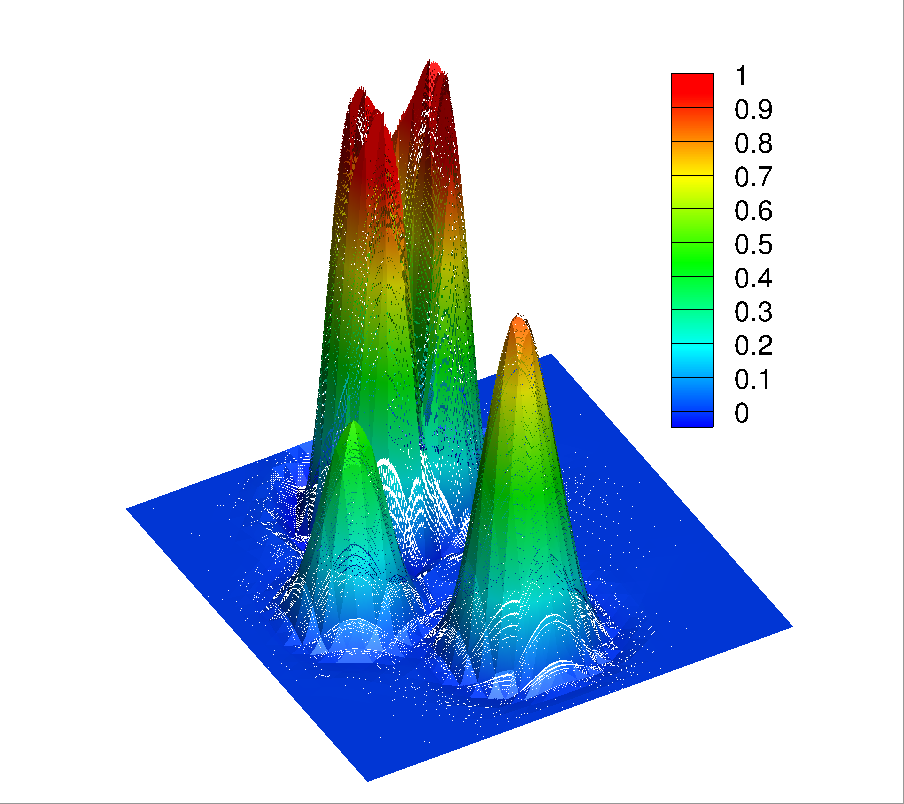}\\[1ex]
		e) $R = 5, r = 5$,\;  $L^2$-err: 8.80E-02 & f) $R = 6, r = 6$,\;  $L^2$-err: 6.49E-02  \\
		\includegraphics[draft = false, width=0.33\linewidth, trim=100 20 100 50,clip]{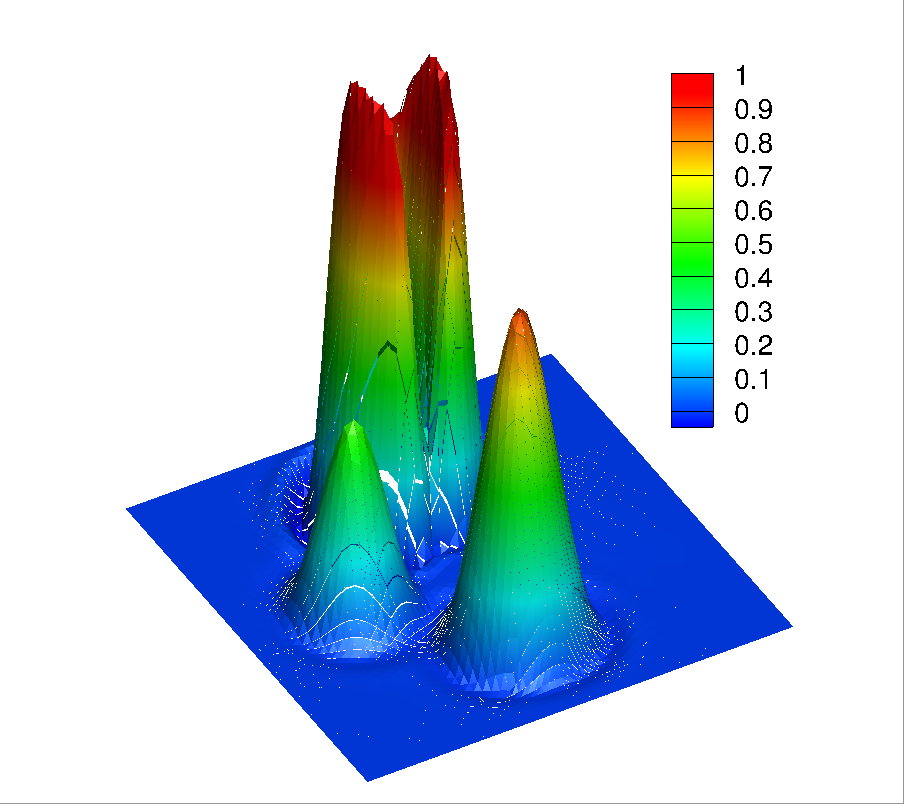} &
		\includegraphics[draft = false, width=0.33\linewidth, trim=100 20 100 50,clip]{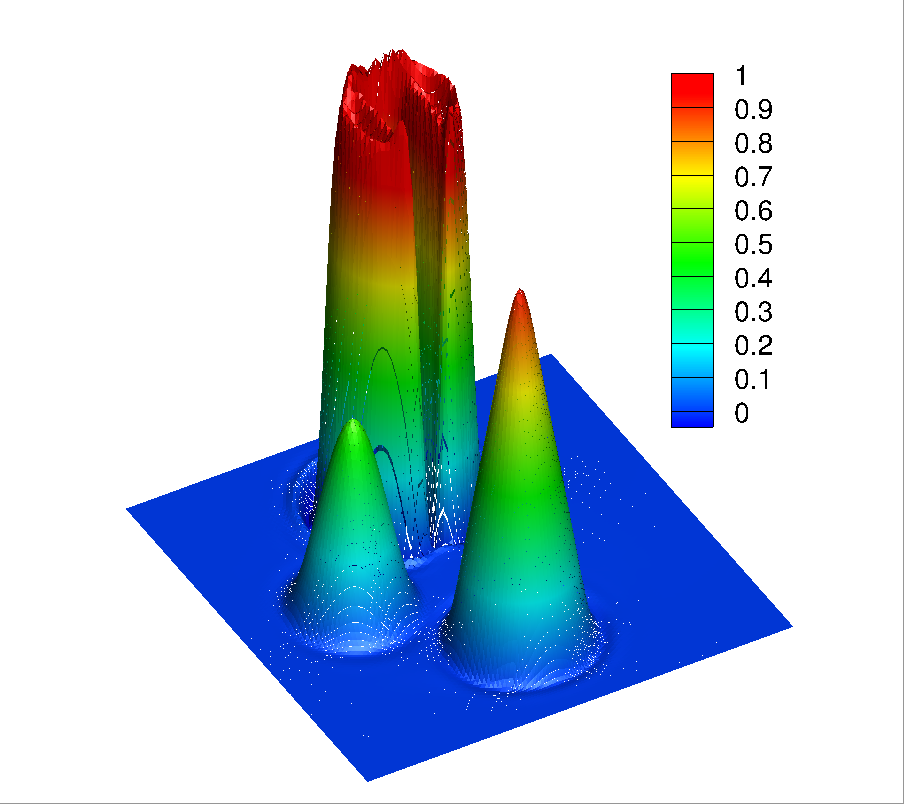}
	\end{tabular}
\caption{Solid body rotation: Final state after one rotation.\label{fig:solidbody}}	
\end{figure}

\begin{figure}
\includegraphics[draft  = false, width=0.48\linewidth, trim=20 30 20 80,clip]{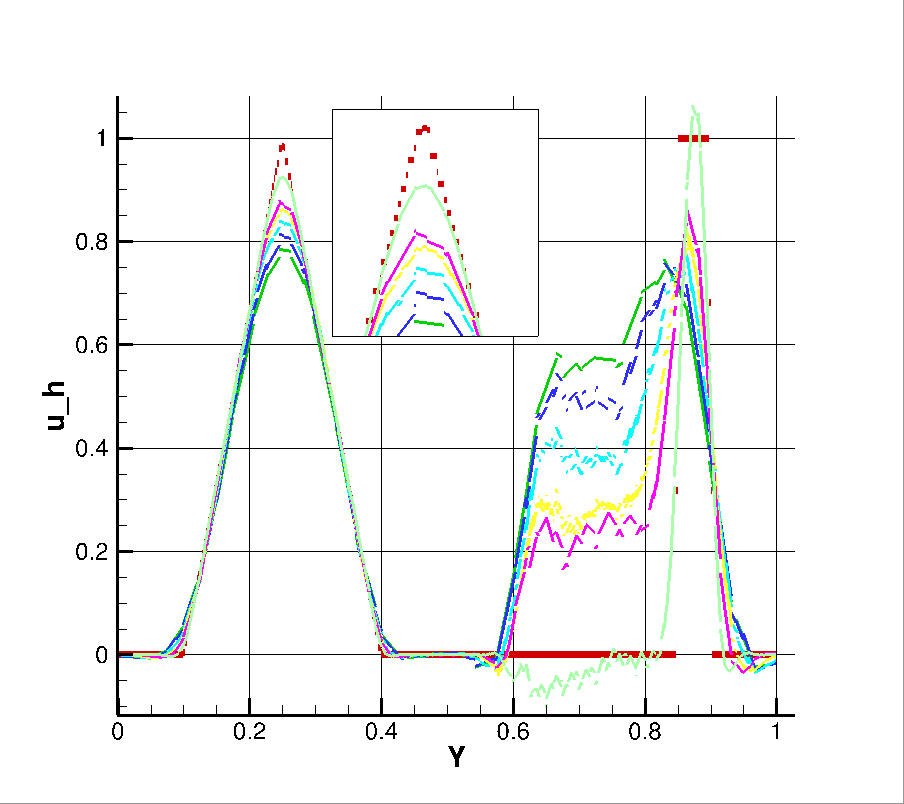}
\hfill
\includegraphics[draft  = false, width=0.48\linewidth, trim=20 30 20 80,clip]{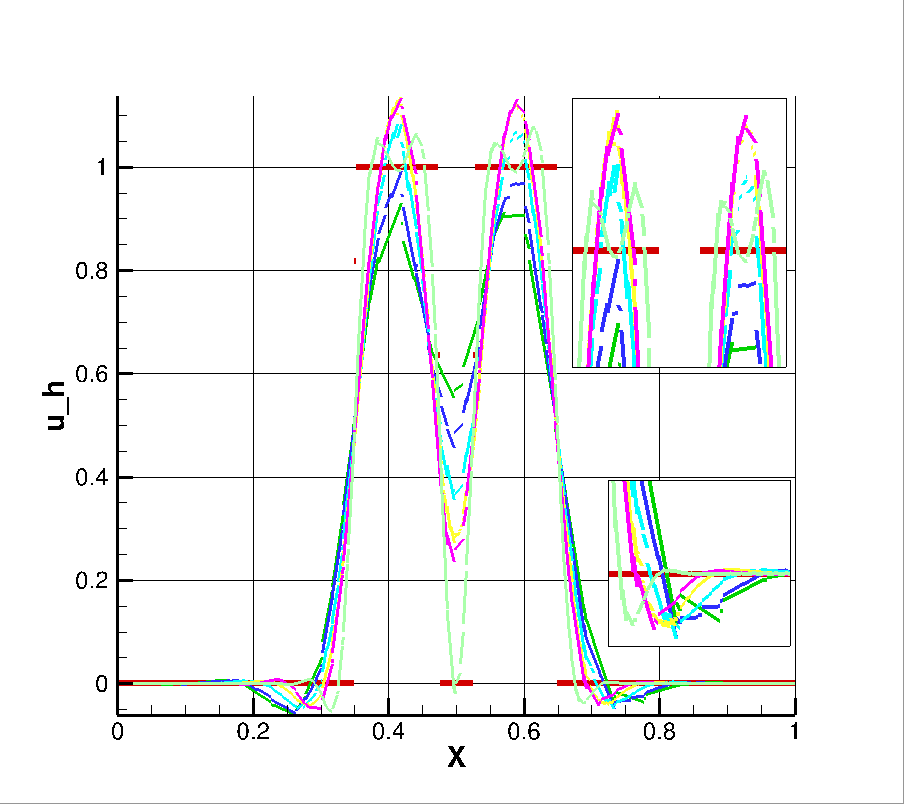}\\
\includegraphics[draft  = false, width=0.165\linewidth, trim=170 300 380 460,clip]{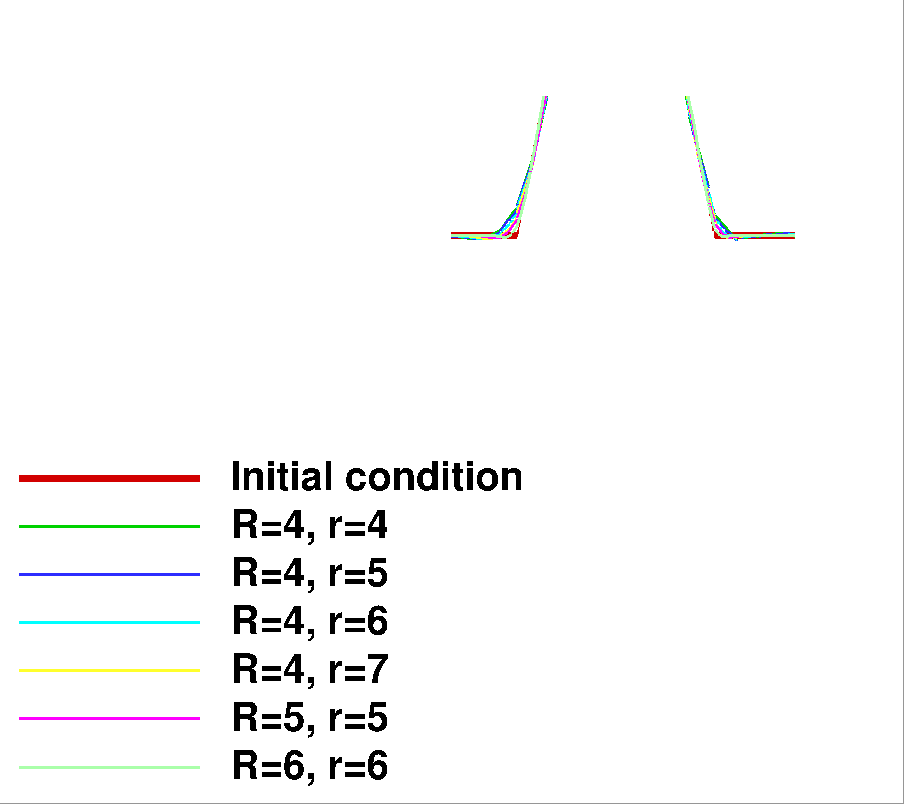}
\includegraphics[draft  = false, width=0.165\linewidth, trim=170 250 380 510,clip]{pictures/solid_body_cross_legend}\hspace{-7mm}
\includegraphics[draft  = false, width=0.165\linewidth, trim=170 203 380 557,clip]{pictures/solid_body_cross_legend}\hspace{-7mm}
\includegraphics[draft  = false, width=0.165\linewidth, trim=170 156 380 604,clip]{pictures/solid_body_cross_legend}\hspace{-7mm}
\includegraphics[draft  = false, width=0.165\linewidth, trim=170 109 380 651,clip]{pictures/solid_body_cross_legend}\hspace{-7mm}
\includegraphics[draft  = false, width=0.165\linewidth, trim=170 62 380 698,clip]{pictures/solid_body_cross_legend}\hspace{-7mm}
\includegraphics[draft  = false, width=0.165\linewidth, trim=170 15 380 745,clip]{pictures/solid_body_cross_legend}
\caption{Solid body rotation: Cross-sections at positions $x$=0.5 (top), $y$=0.75 (bottom).\label{fig:solidbody-cross}}	
\end{figure}

The results presented in Figs.~\ref{fig:solidbody} and \ref{fig:solidbody-cross} illustrate the stabilizing effect of piecewise-constant ($k=1,\ l=m=0$) subcell enrichments on different mesh levels. The standard CG approximation would produce spurious oscillations in the whole domain. The EG method localizes them to a small neighborhood of the slotted cylinder, while producing well-resolved approximations of the smooth hump and sharp cone.

\section{Conclusions}\label{SEC:conclusions}
In this article, we introduced and investigated a~generalization of the enriched Galerkin method that relies on a~two-mesh enrichment with discontinuous functions of arbitrary order. The method was shown to be stable and to converge at the same rate as the discontinuous Galerkin method. Our numerical results demonstrated good agreement with the \textit{a~priori} convergence analysis, although the experimental rates of convergence on regular meshes exceeded those of the analysis by approximately $1/2$ -- in line with the well-known results for the DG method. Our investigation suggests that using local subcell enrichment is an~exceptionally flexible discretization approach for representing solutions of locally highly varying regularity without incurring too much computational overhead. While global subcell enrichments do not offer the same savings in the number of degrees of freedom as the classical EG method, local enrichment in selected cells is ideally suited for $hp$-adaptivity purposes.

In the future work, we plan to extend this methodology to more complicated applications (e.g. shallow--water equations) and look into the possibility of using subcell enrichments in $hp$-adaptive bound-preserving finite element schemes.

\bibliographystyle{amsplain}
\bibliography{EG_Hh}
\end{document}